\pgfplotsset{compat=1.15}
\newtheorem{definition}{Definition}
\newcommand{\Ext}{\operatorname{Ext}}
\newcommand{\Hom}{\operatorname{Hom}}
\newcommand{\op}{\mathit{op}}
\newcommand{\rep}{\operatorname{rep}}
\newcommand{\udim}{\underline{\operatorname{dim}}}
\newcommand{\Gr}{\operatorname{Gr}}
\newcommand{\univ}{\textup{univ}}
\newcommand{\prin}{\textup{prin}}
\newcommand{\trop}{\textup{trop}}
\newcommand{\ice}{\textup{ice}}
\newcommand{\Po}{\mathcal P}
\newtheorem{lemma}{Lemma}
\newtheorem{example}{Example}
\newtheorem{theorem}{Theorem}
\newtheorem{proposition}{Proposition}
\newtheorem{corollary}{Corollary}
\newcommand{\cc}{{\underline c}}
\newcommand{\uzero}{{\underline 0}}
\newcommand{\dd}{{\underline d}}
\renewcommand{\gg}{{\underline g}}
\newcommand{\ee}{{\underline e}}
\newcommand{\xx}{{\underline x}}
\newcommand{\vv}{{\underline v}}
\newcommand{\pp}{{\underline p}}
\newcommand{\qq}{{\underline q}}
\newcommand{\y}{\underline y}
\renewcommand{\v}{\underline v}
\newcommand{\la}{\langle}
\newcommand{\ra}{\rangle}
\newcommand{\m}{{\underline m}}
\renewcommand{\u}{{\underline u}}
\renewcommand{\b}{{\underline b}}
\renewcommand{\i}{i}
\renewcommand{\v}{\underline v}
\renewcommand{\div}{\operatorname{div}}
\newcommand{\Div}{\operatorname{Div}}
\newcommand{\CDiv}{\operatorname{CDiv}}
\newcommand{\Nef}{\operatorname{Nef}}
\newcommand{\Pic}{\operatorname{Pic}}
\renewcommand{\c}{{\underline c}}
\title[ABHY Associahedra and Newton Polytopes of $F$-polynomials]{ABHY Associahedra and Newton Polytopes of $F$-polynomials for finite type cluster algebras}
\author[Bazier-Matte]
{Véronique Bazier-Matte}
\address{LaCIM, UQAM, Montréal, Québec, Canada}
\email{veronique.b.matte@gmail.com}
\author[Chapelier]
       {Nathan Chapelier-Laget}
       \address{LaCIM, UQAM, Montréal, Québec, Canada}
       \email{nathan.chapelier@gmail.com}
\author[Douville]{Guillaume Douville}
\address{LaCIM, UQAM, Montréal, Québec, Canada}
\email{douvilleg@gmail.com }
\author[Mousavand]{Kaveh Mousavand} 
\address{LaCIM, UQAM, Montréal, Québec, Canada}
\email{mousavand.kaveh@gmail.com }
\author[Thomas]{Hugh Thomas}
\address{LaCIM, UQAM, Montréal, Québec, Canada}
\email{hugh.ross.thomas@gmail.com}
\author[Yıldırım]{Emine Yıldırım}
\address{LaCIM, UQAM, Montréal, Québec, Canada}
\email{emineyyildirim@gmail.com }
\thanks{VBM and GD were supported by NSERC Alexander Graham Bell graduate scholarships.  KM and EY were partially supported by ISM scholarships. HT was supported by the Canada Research Chairs program and an NSERC Discovery Grant.}
\subjclass[2010]{13F60,16G20}
\begin{document}

\begin{abstract} A new construction of the associahedron was recently given by Arkani-Hamed, Bai, He, and Yan in connection with the physics of scattering amplitudes.  We show that their construction (suitably understood) can be applied to construct generalized associahedra of any simply-laced Dynkin type.  Unexpectedly, we also show that this same construction produces Newton polytopes for all the $F$-polynomials of the corresponding cluster algebras. In addition, we show that the toric variety associated to the $g$-vector fan has the property that its nef cone is simplicial.
\end{abstract}
\maketitle

\section{Introduction}
Let $Q$ be a Dynkin quiver.  That is to say, $Q$ is an
orientation of a simply-laced Dynkin diagram, with vertices numbered 1 to $n$.
Let $B_0$ be the matrix with the property that the entry
$(B_0)_{ij}$ equals the number of arrows from $i$ to $j$ minus the number
of arrows from $j$ to $i$.  Starting from the matrix $B_0$, one
can define a corresponding cluster algebra $\mathcal A(Q)$, which is a
commutative ring
with a distinguished set of generators, known as cluster variables, grouped together into overlapping sets of size $n$ known as clusters.
Because
of our particular choice of $Q$, there are only finitely many cluster variables.
Each cluster variable has an associated vector in $\mathbb Z^n$, its
$g$-vector.  The $g$-vectors are the rays of the $g$-vector fan, the
maximal-dimensional cones of which correspond to clusters.  This fan is proper, in the sense that the union of its cones is all of $\mathbb R^n$.

Considerable attention has been given to the problem of constructing polytopes whose outer normal fan is the $g$-vector fan.  Such polytopes are called \emph{generalized associahedra}.  
The question of whether the $g$-vector fan can be realized in this way was
first raised by Fomin and Zelevinsky in \cite{FZ}, and first solved by Chapoton, Fomin, and Zelevinsky \cite{CFZ}.  In fact,
$g$-vectors had not yet been defined at the time of these two papers, but
the fan which they study would subsequently be recognized as the
$g$-vector fan associated to a particular orientation of each Dynkin diagram.  Subsequently,
\cite{HLT} gave a construction which solves the problem as described here.  \cite{HPS} solves a more general problem, where the initial quiver is assumed only to be mutation-equivalent to a Dynkin quiver, and not necessarily Dynkin itself.  The papers we have cited actually work in greater generality, in that they also treat Dynkin types which are not simply laced.  In this paper we focus on the simply-laced case because it is technically easier.

The prehistory of this problem goes back much further.  The combinatorics of the face lattice of a
generalized associahedron is not sensitive to the orientation of $Q$.  When
$Q$ is of type $A_n$, the face lattice is that of the associahedron, as originally defined, as a cell complex, by Stasheff \cite{Sta}.  The first polytopal realization to appear in the literature is due to Lee \cite{Lee}, and many others followed.  An excellent overview is provided by \cite{CSZ}.  
One particular realization of the associahedron has been
studied by many authors \cite{SS,Loday,Postnikov,RSS}, including Loday, whose name is most often associated to it.
The associahedra constructed in this way turn out to be generalized associahedra corresponding to the linear orientation of the
$A_n$ diagram.  Recently, yet another construction of this associahedron was
given in the physics literature by Arkani-Hamed, Bai, He, and Yan \cite{ABHY}, in connection with
scattering amplitudes for bi-adjoint scalar $\varphi^3$  
theory.  
We refer to this as the ABHY construction.

In this paper, we extend the ABHY construction to arbitrary (simply-laced)
Dynkin quivers.  We further show that, quite surprisingly, this same construction realizes the Newton polytopes of the $F$-polynomials of the corresponding cluster algebras.  
(The $F$-polynomials are certain polynomials in $n$ variables which are a
reparameterization of the cluster variables; in particular, the cluster
variables can be recovered from them.)  In fact, the ABHY
construction can be seen even more naturally as constructing Newton polytopes of
certain
{universal $F$-polynomials}, which we define.
The universal $F$-polynomials bear the
same relationship to the cluster algebra with universal coefficients as the
usual $F$-polynomials do to the cluster algebra with principal coefficients.
We also consider the toric variety associated to the $g$-vector fan. We show that our results on realizations of associahedra imply that the nef cone of this toric variety is simplicial.

After the appearance of the first version of this paper in 2018,
  Palu, Padrol, Pilaud, and Plamondon \cite{PPPP} showed that a very similar construction can be applied to construction of
  generalized associahedra for all seeds in all Dynkin type cluster algebras.
  The first author of the present paper also extended the techniques in the present paper in her thesis \cite{BM}, giving another proof that the same construction can be applied to any seed of simply-laced Dynkin type, and that this construction also gives Newton polytopes of $F$-polynomials in the same generality.
  Fei \cite{F1,F2} takes a less explicit approach but proves a very general result, to the effect that if $A$ is any algebra with finitely many $\tau$-rigid
  indecomposable modules, then a polytope dual to the $\tau$ tilting fan of $A$ can be constructed as the Minkowski sum of the submodule polytopes of the
  $\tau$-rigid indecomposables.

  Our result that our polytopes also yield Newton polytopes of $F$-polynomials
  has subsequently been extended to non-simply laced types by Arkani-Hamed, He,
and Lam \cite{AHL}, by using a folding argument to reduce to the case we consider.

\section{Construction}
In the interests of self-containedness, we will begin with a completely
explicit, if somewhat unmotivated, description of our construction.  We will
then provide a more representation-theoretic description, which is needed
for the proof of correctness.  Statements which are not proved in this section will be proved in the following section (or will turn out to be equivalent to well-known facts from the theory of quiver representations).


We write $Q_0$ for the set of vertices of $Q$ and $Q_1$ for the set of arrows.
We assume that $Q_0=\{1,\dots,n\}$.
We write
$Q^\op$ for the opposite quiver of $Q$, all of whose arrows are reversed compared to
$Q$.

Draw $\mathbb Z_{\geq 0}$ many copies of $Q$.  The vertices in this quiver are denoted
$(i,j)$ where $i \in \mathbb Z_{\geq 0}$ and $j\in Q_0$.  We also add
arrows between the copies of $Q$: if there is an arrow from $j$ to $k$
in $Q$, we put an arrow from $(i,k)$ to $(i+1,j)$.  This infinite quiver
we denote by $\mathbb Z_{\geq 0}Q$.  
See Example \ref{ex-a3} below for an example of the initial part of the 
quiver $\mathbb Z_{\geq 0}Q$ for $Q$ the quiver $\xymatrix{1 \ar[r] & 2   & 3 \ar[l]}$.

We associate to each vertex $(i,j)$ a vector in
$\mathbb Z^n$, which we call the dimension vector, and which we denote $\udim(i,j)$.  (What exactly it is the dimension of will be explained in the following section.  For now, it is simply an integer vector.)
To $(0,j)$, we associate the dimension vector $\udim(0,j)$
obtained by
putting a 1 at every vertex that can be reached from $j$ by following arrows
of $Q^\op$ (including the vertex $j$ itself), and 0 at all other vertices. For $(i,j)$ with $i>0$, we associate
the dimension vector which satisfies:
$$\udim(i,j)+\udim(i-1,j)=\sum_{(i-1,j)\rightarrow (i',j')\rightarrow (i,j)} \udim(i',j')$$

Here, the sum on the righthand side runs over all vertices $(i',j')$
on a path of
length two from $(i-1,j)$ to $(i,j)$.  Starting with the dimension vectors already defined
for $(0,j)$, these equations allow us to deduce the value of $\udim(i,j)$ for all
$(i,j)$ in $\mathbb Z_{\geq 0}Q$ inductively.   

It turns out that the dimension vectors calculated in this way have the
property that they are non-zero and sign-coherent, in the sense that each
$\udim(i,j)$ either has all entries non-negative or all entries non-positive.
In these two cases we simply say
that $\udim(i,j)$ is non-negative or non-positive,
respectively.  

A certain subset of the vertices of $\mathbb Z_{\geq 0}Q$ are in natural correspondennce with the cluster variables of $\mathcal A(Q)$.  For $1\leq j\leq n$,
define $i_j$ to be
the maximal index such that all of $\udim(0,j),\udim(1,j),\dots,\udim(i_j,j)$
are non-negative.  

Define
\begin{eqnarray*}
  \mathcal I^+&=&\{(i,j) \mid 1\leq j \leq n, 0\leq i \leq i_j\}\\
  \mathcal I&=&\{(i,j) \mid 1\leq j \leq n, 0\leq i \leq i_j+1\}
  \end{eqnarray*}

There is a natural bijection between the elements of $\mathcal I$ and the cluster variables of $A(Q)$.  We recall the details of the correspondence in Section \ref{cluster-intro} below.  We write $x_{ij}$ for the cluster variable associated to $(i,j)\in\mathcal I$.  We say that two cluster variables are \emph{compatible} if there is some cluster which contains both of them.  We say that two elements of $\mathcal I$ are compatible if the corresponding cluster variables are.  

Consider a real vector space with basis indexed by the elements of $\mathcal I$, say $V=\mathbb R^{\mathcal I}$.  For $(i,j)\in \mathcal I$, we write $p_{ij}$ for the coordinate function on $V$ indexed by $(i,j)$. 

Fix a collection of positive integers $\cc=(c_{ij})_{ij \in \mathcal I^+}$.
We construct an associahedron for each choice of $\cc$.

Consider the following collection of equations,
one for each $(i,j)\in \mathcal I^+$.

$$p_{i,j} + p_{i+1,j} = c_{ij} + \sum_{(i,j) \rightarrow (i',j') \rightarrow (i+1,j)} p_{i',j'}$$

We call these equations the \emph{$\cc$-deformed mesh relations}.
They define an $n$-dimensional affine space $\mathbb E_\cc$
inside $V$.

Write $\mathbb U_\cc$ for the region inside $\mathbb E_\cc$ all of whose coordinates
$p_{ij}$ are non-negative.  That is to say, $\mathbb U_\cc$ is the intersection of
the positive orthant in $V$ with $\mathbb E_\cc$.

\begin{example}~\label{ex-a3}
  Let us consider the quiver $Q:\xymatrix{1 \ar[r] & 2   & 3 \ar[l]}$. The following shows the part of $\mathbb Z_{\geq 0}Q$ whose vertices
  are in $\mathcal{I}$, labelled by the elements of $\mathcal I$.  

\[\begin{tikzpicture}[xscale=1.3, yscale=1]
    \draw (-3,2.4) node (01) {$(0,1)$};
    \draw (-3,0) node (02)   {$(0,3)$};
    \draw (-2,1.2) node (03) {$(0,2)$};
    \draw (-1,2.4) node (11) {$(1,1)$};
    \draw (-1,0) node (12)   {$(1,3)$};
    \draw (0,1.2) node (13)  {$(1,2)$};
    \draw (1,2.4) node (21)  {$(2,1)$};
    \draw (1,0) node  (22)   {$(2,3)$};
    \draw (2,1.2) node (23)  {$(2,2)$};
    \draw [->] (01) -- (03);
    \draw [->] (02) -- (03);
    \draw [->] (03) -- (11);
    \draw [->] (03) -- (12);
    \draw [->] (11) -- (13);
    \draw [->] (12) -- (13);
    \draw [->] (13) -- (21);
    \draw [->] (13) -- (22);
    \draw [->] (21) -- (23);
    \draw [->] (22) -- (23);
\end{tikzpicture} \]

The corresponding dimension vectors are:

\[\begin{tikzpicture}[xscale=1.3, yscale=1]
    \draw (-3,2.4) node (01) {$(1,0,0)$};
    \draw (-3,0) node (02)   {$(0,0,1)$};
    \draw (-2,1.2) node (03) {$(1,1,1)$};
    \draw (-1,2.4) node (11) {$(0,1,1)$};
    \draw (-1,0) node (12)   {$(1,1,0)$};
    \draw (0,1.2) node (13)  {$(0,1,0)$};
    \draw (1,2.4) node (21)  {$(0,0,-1)$};
    \draw (1,0) node  (22)   {$(-1,0,0)$};
    \draw (2,1.2) node (23)  {$(-1,-1,-1)$};
    \draw [->] (01) -- (03);
    \draw [->] (02) -- (03);
    \draw [->] (03) -- (11);
    \draw [->] (03) -- (12);
    \draw [->] (11) -- (13);
    \draw [->] (12) -- (13);
    \draw [->] (13) -- (21);
    \draw [->] (13) -- (22);
    \draw [->] (21) -- (23);
    \draw [->] (22) -- (23);
\end{tikzpicture} \]

We fix a six-tuple of positive integers $\cc=(c_{ij})_{(i,j)\in \mathcal I^+}$.  
The region $\mathbb E_\cc$ is cut out by the following equations:
\begin{align*}
p_{01}+p_{11}&=p_{02}+c_{01}\\
p_{03}+p_{13}&=p_{02}+c_{03}\\
p_{02}+p_{12}&=p_{11}+p_{13}+c_{02}\\
p_{11}+p_{21}&=p_{12}+c_{11}\\
p_{13}+p_{23}&=p_{12}+c_{13}\\
p_{12}+p_{22}&=p_{21}+p_{23}+c_{12}
\end{align*}
\end{example}

Notice that, by construction, the dimension vectors 
satisfy the $\underline 0$-deformed mesh relations (which are generally called the \emph{mesh relations}), that is to say, the deformed mesh relations with the deformation parameters set to zero.  
There is another important collection of vectors which 
satisfy these equations: the \emph{$g$-vectors}.  By 
definition, $g(0,j)$ is the $j$-th standard basis vector,
and the other $g$-vectors are determined by the
mesh relations.  The \emph{$g$-vector fan} is the fan whose rays are the $g$-vectors, and such that a collection of rays generates a cone of the fan if and only if the corresponding collection of cluster variables is compatible.   

\begin{example}\label{ex-a3-b}
We continue Example~\ref{ex-a3}. In this case the corresponding $g$-vectors are as follows:
  
\[\begin{tikzpicture}[xscale=1.3, yscale=1]
    \draw (-3,2.4) node (01) {$(1,0,0)$};
    \draw (-3,0) node (02)   {$(0,0,1)$};
    \draw (-2,1.2) node (03) {$(0,1,0)$};
    \draw (-1,2.4) node (11) {$(-1,1,0)$};
    \draw (-1,0) node (12)   {$(0,1,-1)$};
    \draw (0,1.2) node (13)  {$(-1,1,-1)$};
    \draw (1,2.4) node (21)  {$(0,0,-1)$};
    \draw (1,0) node  (22)   {$(-1,0,0)$};
    \draw (2,1.2) node (23)  {$(0,-1,0)$};
    \draw [->] (01) -- (03);
    \draw [->] (02) -- (03);
    \draw [->] (03) -- (11);
    \draw [->] (03) -- (12);
    \draw [->] (11) -- (13);
    \draw [->] (12) -- (13);
    \draw [->] (13) -- (21);
    \draw [->] (13) -- (22);
    \draw [->] (21) -- (23);
    \draw [->] (22) -- (23);
\end{tikzpicture} \]

\end{example}

Note that, in the example, the $g$-vectors corresponding to the elements of
$\mathcal I\setminus \mathcal I^+$ are the negative standard basis
vectors.  This is a general phenomenon, and allows us to define an important projection $\pi:V\rightarrow \mathbb R^n$: the $k$-th
coordinate of the projection to $\mathbb R^n$ is given by $p_{ij}$ for $(i,j)\in \mathcal I\setminus \mathcal I^+$ such that $g(i,j)=-e_k$.
This projection defines a bijection between $\mathbb E_\cc$ and $\mathbb R^n$.
We define $\mathbb A_\cc=\pi(\mathbb U_\cc)$. 

Given a full-dimensional polytope $P$ in $\mathbb R^n$, there is fan associated to it called the \emph{outer normal fan} of $P$, denoted $\Sigma_P$.  For each facet $F$ of $P$, let $\rho_F$ be the ray pointing in the direction perpendicular to $F$ and away from $P$.  The collection of rays $\{\rho_F \mid F$ is a facet of $P\}$ are the rays of $\Sigma_P$; the set of rays 
$\{\rho_{F_1},\dots,\rho_{F_j}\}$ generates a cone in $\Sigma_P$ if and only if there is a face $G$ of $P$ such that the facets of $P$ containing $G$ are exactly $F_1,\dots,F_j$.

The following is our main theorem about realizing 
generalized associahedra.  

\begin{theorem}\label{th-one} 
\begin{enumerate}
\item Each facet of $\mathbb U_\cc$ is defined by the vanishing of exactly one
coordinate of $V$. 
\item If $G$ is a face of $\mathbb U_\cc$, then the set 
$\{ \alpha\in \mathcal I \mid G$ lies on the
hyperplane $p_\alpha=0 \}$ is a compatible subset of 
$\mathcal I$, and every compatible subset arises this way.
\item In particular, the vertices of $\mathbb U_\cc$ 
correspond to clusters.
\item $\pi$ is an isomorphism of affine spaces between
$\mathbb E_\cc$ and $\mathbb R^n$.  
Consequently, the faces of $\mathbb A_\cc$ also 
correspond bijectively to compatible sets in $\mathcal I$.  
\item If $F_{ij}$ is the facet of $\mathbb U_\cc$ given by $p_{ij}=0$, the normal to
$\pi(F_{ij})$ oriented away from $\mathbb A_\cc$ is the ray 
generated by $g(i,j)$.   
\end{enumerate}\end{theorem}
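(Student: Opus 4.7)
I would first establish parts (4) and (5), which together reduce the remaining geometric assertions to a combinatorial check against the $g$-vector fan. The key observation is that the $g$-vectors themselves satisfy the undeformed mesh relations, which allows them to be used to construct an explicit affine inverse of $\pi$.

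\textbf{Parts (4) and (5).} For $x\in \mathbb{R}^n$, define $\sigma(x)\in V$ by setting $\sigma(x)_{ij}=x_k$ on the boundary $(i,j)\in \mathcal{I}\setminus \mathcal{I}^+$ (where $k$ is the unique index with $g(i,j)=-e_k$), and extending to $(i,j)\in \mathcal{I}^+$ by using the $\cc$-deformed mesh relations. Because the $g$-vectors satisfy the \emph{undeformed} mesh relations and the prescribed boundary values already equal $-\langle g(i,j),x\rangle$, this extension yields
\[\sigma(x)_{ij}=-\langle g(i,j),x\rangle+\delta_{ij},\]
with $\delta_{ij}$ a constant depending linearly on $\cc$ and vanishing on the boundary. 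By construction $\sigma(x)\in \mathbb{E}_\cc$ and $\pi\circ\sigma=\mathrm{id}$. To conclude that $\sigma$ and $\pi$ are mutual affine isomorphisms it suffices to prove $\dim\mathbb{E}_\cc=n$: viewing the $n$ mesh relations at a given slice $i$ as a linear system in the unknowns $(p_{i,k})_{k\in Q_0}$ gives coefficient matrix $I-A$ for $A$ the adjacency matrix of $Q$, which is invertible because $Q$ is acyclic. Part (5) is then immediate: $p_{ij}|_{\mathbb{E}_\cc}$ identifies via $\pi$ with the affine functional $x\mapsto -\langle g(i,j),x\rangle+\delta_{ij}$, whose zero set is $\pi(F_{ij})$ and whose outer normal with respect to $p_{ij}\geq 0$ lies in the direction of $g(i,j)$.

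\textbf{Parts (1)--(3) and the main obstacle.} From (4) and (5), $\mathbb{A}_\cc=\{x\in \mathbb{R}^n : \langle g(i,j),x\rangle\leq \delta_{ij}\ \text{for all}\ (i,j)\in \mathcal{I}\}$. I would then invoke the well-known fact that the $g$-vectors form the rays of a complete simplicial fan whose maximal cones are $\{g(i,j):(i,j)\in C\}$ as $C$ ranges over clusters, with lower-dimensional cones indexed by compatible subsets of $\mathcal{I}$. Parts (1)--(3) thereby reduce to checking that for each cluster $C$, the unique point $x_C$ satisfying $\langle g(i,j),x_C\rangle=\delta_{ij}$ for $(i,j)\in C$ also satisfies $\langle g(i',j'),x_C\rangle<\delta_{i'j'}$ for every $(i',j')\notin C$; equivalently, at $\sigma(x_C)\in\mathbb{E}_\cc$ every remaining coordinate $p_{i'j'}$ is strictly positive. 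This strict positivity is the principal difficulty. I expect to prove it via the representation-theoretic interpretation promised in the following section, by expressing each such $p_{i'j'}$ at $\sigma(x_C)$ as a positive linear combination of the $c_{ab}$'s whose coefficients record dimensions of Hom- or Ext-spaces (equivalently, data of $F$-polynomials) that are nonzero precisely when $(i',j')$ is incompatible with some element of $C$. Since $\cc$ is componentwise positive, any such nonzero combination is strictly positive, and parts (1)--(3) then follow.
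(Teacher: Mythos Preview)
Your treatment of (4) and (5) is essentially the paper's: the section $\sigma$ you construct is the same map the paper writes as ``right multiply by $-G$ and add $\vv_\cc$'', so your $\delta_{ij}$ is precisely $(\vv_\cc)_{ij}$, and the outer-normal computation agrees. One quibble: the boundary $\mathcal I\setminus\mathcal I^+$ is not a slice of constant $i$, so your ``coefficient matrix $I-A$ at a given slice $i$'' argument needs to be rephrased as solving the deformed mesh relations inductively against the partial order on the AR quiver (sinks first), which is exactly what the paper says; the substance is fine.

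For (1)--(3) you have correctly identified the reduction, but the step you flag as the ``principal difficulty'' is a genuine gap and your proposed fix is not what the paper does. You hope to exhibit, for each cluster $C$ and each $\beta\notin C$, the value $p_\beta(\sigma(x_C))$ as a manifestly nonnegative linear combination of the $c_{ab}$ with representation-theoretic coefficients. Such closed formulas are not established in the paper, and proving them directly would amount to the whole theorem. The paper avoids this entirely via two short lemmas and a walk argument:
\begin{itemize}
\item \emph{Key Lemma.} If $\alpha,\beta$ are incompatible and $\xx\in\mathbb E_\cc$ has $x_\alpha=x_\beta=0$, then the non-split triangle $W_\beta\to E\to W_\alpha\to W_\beta[1]$ forces $\sum(\text{coords of }\xx\text{ at the summands of }E)<0$; hence $\xx\notin\mathbb U_\cc$. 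This only tells you that pairs of incompatible coordinates cannot simultaneously vanish \emph{inside} $\mathbb U_\cc$; it does not give your positive-combination formula.
\item \emph{Boundedness.} Writing $\xx=\vv_\cc+\sum m'_k\gg^k$ and reading off the coordinates at the shifted projectives (where $\vv_\cc$ vanishes and the $g$-vectors are $-e_k$) and at the projectives, the $m'_k$ are squeezed between $0$ and a fixed negative bound.
\item \emph{Mutation walk.} Any vertex of $\mathbb U_\cc$ lies on at least $n$ coordinate hyperplanes, pairwise compatible by the Key Lemma, hence on exactly the $n$ hyperplanes of some cluster. Conversely, starting from any vertex $q_T$ and walking along the edge where $n-1$ of those hyperplanes still vanish, boundedness forces the edge to terminate at some new hyperplane, which by the Key Lemma must be the unique $\alpha'$ compatible with the remaining $n-1$; this is the mutation $T'=\mu(T)$. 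Since clusters are mutation-connected, every cluster arises as a vertex.
\end{itemize}
The point is that the paper never verifies your strict inequality $p_\beta(\sigma(x_C))>0$ cluster by cluster; instead it shows there are no ``extra'' vertices (Key Lemma) and no ``missing'' ones (boundedness plus edge-walking). Your approach would work if you could supply the positive-combination formula, but that is significantly harder than the two lemmas the paper actually needs, and your sketch does not indicate how to obtain it.
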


\begin{example}\label{ex-a3-c} We continue Example~\ref{ex-a3-b}.

Here we show an illustration of $\mathbb A_\cc$ in this case.  
At each vertex, we have drawn a small copy of
the quiver with vertex set $\mathcal I$ (with the arrows omitted) on which
we have marked the cluster corresponding to the vertex.  

\begin{figure}[H]
\includegraphics[width=7cm]{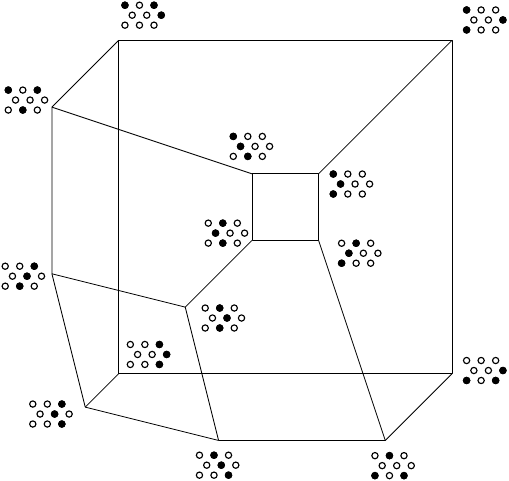}
\caption{Associahedron corresponding to $1 \rightarrow 2 \leftarrow3$.}
\end{figure}

To represent the vertices of $\mathbb U_\cc$ in $V$, it is convenient to write the values of $p_{ij}$ at position $(i,j)$, with the value $c_{ij}$ positioned between $(i,j)$ and $(i+1,j)$.  We write $c_{ij}$ in red
to make clear the distinction between the two kinds of entries.  
To see $\mathbb A_\cc$, we forget everything in each row except the last
entry.  

In the above example, we set all the $c_{ij}=1$.  The vertices 
are as follows:
\begin{align*} \begin{tikzpicture}[xscale=.5,yscale=.5]
\node (b) at (-1,1) {0};
\node (c) at (-1,-1) {0};
\node (a) at (0,0) {0};
\node (d) at (1,1) {1};
\node (e) at (1,-1) {1};
\node (f) at (2,0) {3};
\node (g) at (3,1) {3};
\node (h) at (3,-1) {3 };
\node (i) at (4,0) {4};
\node[red] (p) at (0,1) {1};
\node[red] (q) at (0,-1) {1};
\node[red] (r) at (1,0) {1};
\node[red] (s) at (2,1) {1};
\node[red] (t) at (2,-1) {1};
\node[red]  (u) at (3,0) {1};
\end{tikzpicture}
\quad
\begin{tikzpicture}[xscale=.5,yscale=.5]
\node (b) at (-1,1) {0};
\node (c) at (-1,-1) {1};
\node (a) at (0,0) {0};
\node (d) at (1,1) {1};
\node (e) at (1,-1) {0};
\node (f) at (2,0) {2};
\node (g) at (3,1) {2};
\node (h) at (3,-1) {3};
\node (i) at (4,0) {4};
\node[red] (p) at (0,1) {1};
\node[red] (q) at (0,-1) {1};
\node[red] (r) at (1,0) {1};
\node[red] (s) at (2,1) {1};
\node[red] (t) at (2,-1) {1};
\node[red]  (u) at (3,0) {1};
\end{tikzpicture}
\quad
\begin{tikzpicture}[xscale=.5,yscale=.5]
\node (b) at (-1,1) {1};
\node (c) at (-1,-1) {0};
\node (a) at (0,0) {0};
\node (d) at (1,1) {0};
\node (e) at (1,-1) {1};
\node (f) at (2,0) {2};
\node (g) at (3,1) {3};
\node (h) at (3,-1) {2};
\node (i) at (4,0) {4};
\node[red] (p) at (0,1) {1};
\node[red] (q) at (0,-1) {1};
\node[red] (r) at (1,0) {1};
\node[red] (s) at (2,1) {1};
\node[red] (t) at (2,-1) {1};
\node[red]  (u) at (3,0) {1};
\end{tikzpicture}\\
\begin{tikzpicture}[xscale=.5,yscale=.5]
\node (b) at (-1,1) {1};
\node (c) at (-1,-1) {1};
\node (a) at (0,0) {0};
\node (d) at (1,1) {0};
\node (e) at (1,-1) {0};
\node (f) at (2,0) {1};
\node (g) at (3,1) {2};
\node (h) at (3,-1) {2};
\node (i) at (4,0) {4};
\node[red] (p) at (0,1) {1};
\node[red] (q) at (0,-1) {1};
\node[red] (r) at (1,0) {1};
\node[red] (s) at (2,1) {1};
\node[red] (t) at (2,-1) {1};
\node[red]  (u) at (3,0) {1};
\end{tikzpicture}
\quad
\begin{tikzpicture}[xscale=.5,yscale=.5]
\node (b) at (-1,1) {2};
\node (c) at (-1,-1) {2};
\node (a) at (0,0) {1};
\node (d) at (1,1) {0};
\node (e) at (1,-1) {0};
\node (f) at (2,0) {0};
\node (g) at (3,1) {1};
\node (h) at (3,-1) {1};
\node (i) at (4,0) {3};
\node[red] (p) at (0,1) {1};
\node[red] (q) at (0,-1) {1};
\node[red] (r) at (1,0) {1};
\node[red] (s) at (2,1) {1};
\node[red] (t) at (2,-1) {1};
\node[red]  (u) at (3,0) {1};
\end{tikzpicture}
\quad
\begin{tikzpicture}[xscale=.5,yscale=.5]
\node (b) at (-1,1) {2};
\node (c) at (-1,-1) {3};
\node (a) at (0,0) {2};
\node (d) at (1,1) {1};
\node (e) at (1,-1) {0};
\node (f) at (2,0) {0};
\node (g) at (3,1) {0};
\node (h) at (3,-1) {1};
\node (i) at (4,0) {2};
\node[red] (p) at (0,1) {1};
\node[red] (q) at (0,-1) {1};
\node[red] (r) at (1,0) {1};
\node[red] (s) at (2,1) {1};
\node[red] (t) at (2,-1) {1};
\node[red]  (u) at (3,0) {1};
\end{tikzpicture}\\
\begin{tikzpicture}[xscale=.5,yscale=.5]
\node (b) at (-1,1) {3};
\node (c) at (-1,-1) {2};
\node (a) at (0,0) {2};
\node (d) at (1,1) {0};
\node (e) at (1,-1) {1};
\node (f) at (2,0) {0};
\node (g) at (3,1) {1};
\node (h) at (3,-1) {0};
\node (i) at (4,0) {2};
\node[red] (p) at (0,1) {1};
\node[red] (q) at (0,-1) {1};
\node[red] (r) at (1,0) {1};
\node[red] (s) at (2,1) {1};
\node[red] (t) at (2,-1) {1};
\node[red]  (u) at (3,0) {1};
\end{tikzpicture}
\quad
\begin{tikzpicture}[xscale=.5,yscale=.5]
\node (b) at (-1,1) {3};
\node (c) at (-1,-1) {3};
\node (a) at (0,0) {3};
\node (d) at (1,1) {1};
\node (e) at (1,-1) {1};
\node (f) at (2,0) {0};
\node (g) at (3,1) {0};
\node (h) at (3,-1) {0};
\node (i) at (4,0) {1};
\node[red] (p) at (0,1) {1};
\node[red] (q) at (0,-1) {1};
\node[red] (r) at (1,0) {1};
\node[red] (s) at (2,1) {1};
\node[red] (t) at (2,-1) {1};
\node[red]  (u) at (3,0) {1};
\end{tikzpicture}
\quad
\begin{tikzpicture}[xscale=.5,yscale=.5]
\node (b) at (-1,1) {3};
\node (c) at (-1,-1) {3};
\node (a) at (0,0) {4};
\node (d) at (1,1) {2};
\node (e) at (1,-1) {2};
\node (f) at (2,0) {1};
\node (g) at (3,1) {0};
\node (h) at (3,-1) {0};
\node (i) at (4,0) {0};
\node[red] (p) at (0,1) {1};
\node[red] (q) at (0,-1) {1};
\node[red] (r) at (1,0) {1};
\node[red] (s) at (2,1) {1};
\node[red] (t) at (2,-1) {1};
\node[red]  (u) at (3,0) {1};
\end{tikzpicture}
\\
\begin{tikzpicture}[xscale=.5,yscale=.5]
\node (b) at (-1,1) {3};
\node (c) at (-1,-1) {0};
\node (a) at (0,0) {2};
\node (d) at (1,1) {0};
\node (e) at (1,-1) {3};
\node (f) at (2,0) {2};
\node (g) at (3,1) {3};
\node (h) at (3,-1) {0};
\node (i) at (4,0) {2};
\node[red] (p) at (0,1) {1};
\node[red] (q) at (0,-1) {1};
\node[red] (r) at (1,0) {1};
\node[red] (s) at (2,1) {1};
\node[red] (t) at (2,-1) {1};
\node[red]  (u) at (3,0) {1};
\end{tikzpicture}
\quad
\begin{tikzpicture}[xscale=.5,yscale=.5]
\node (b) at (-1,1) {0};
\node (c) at (-1,-1) {3};
\node (a) at (0,0) {2};
\node (d) at (1,1) {3};
\node (e) at (1,-1) {0};
\node (f) at (2,0) {2};
\node (g) at (3,1) {0};
\node (h) at (3,-1) {3};
\node (i) at (4,0) {2};
\node[red] (p) at (0,1) {1};
\node[red] (q) at (0,-1) {1};
\node[red] (r) at (1,0) {1};
\node[red] (s) at (2,1) {1};
\node[red] (t) at (2,-1) {1};
\node[red]  (u) at (3,0) {1};
\end{tikzpicture}
\quad
\begin{tikzpicture}[xscale=.5,yscale=.5]
\node (b) at (-1,1) {3};
\node (c) at (-1,-1) {0};
\node (a) at (0,0) {4};
\node (d) at (1,1) {2};
\node (e) at (1,-1) {5};
\node (f) at (2,0) {4};
\node (g) at (3,1) {3};
\node (h) at (3,-1) {0};
\node (i) at (4,0) {0};
\node[red] (p) at (0,1) {1};
\node[red] (q) at (0,-1) {1};
\node[red] (r) at (1,0) {1};
\node[red] (s) at (2,1) {1};
\node[red] (t) at (2,-1) {1};
\node[red]  (u) at (3,0) {1};
\end{tikzpicture}
\\
\begin{tikzpicture}[xscale=.5,yscale=.5]
\node (b) at (-1,1) {0};
\node (c) at (-1,-1) {3};
\node (a) at (0,0) {4};
\node (d) at (1,1) {5};
\node (e) at (1,-1) {2};
\node (f) at (2,0) {4};
\node (g) at (3,1) {0};
\node (h) at (3,-1) {3};
\node (i) at (4,0) {0};
\node[red] (p) at (0,1) {1};
\node[red] (q) at (0,-1) {1};
\node[red] (r) at (1,0) {1};
\node[red] (s) at (2,1) {1};
\node[red] (t) at (2,-1) {1};
\node[red]  (u) at (3,0) {1};
\end{tikzpicture}
\quad
\begin{tikzpicture}[xscale=.5,yscale=.5]
\node (b) at (-1,1) {0};
\node (c) at (-1,-1) {0};
\node (a) at (0,0) {4};
\node (d) at (1,1) {5};
\node (e) at (1,-1) {5};
\node (f) at (2,0) {7};
\node (g) at (3,1) {3};
\node (h) at (3,-1) {3};
\node (i) at (4,0) {0};
\node[red] (p) at (0,1) {1};
\node[red] (q) at (0,-1) {1};
\node[red] (r) at (1,0) {1};
\node[red] (s) at (2,1) {1};
\node[red] (t) at (2,-1) {1};
\node[red]  (u) at (3,0) {1};
\end{tikzpicture}
\qquad\qquad\quad
\end{align*}
\end{example}

\section{Proof of correctness}\label{section-three}

Since the foundational work of \cite{MRZ,BMRRT}, it has been clear that
representations of quivers are extremely useful in understanding the
combinatorics of cluster algebras.  We will use a setting that is inspired
by \cite{BMRRT}, though described in somewhat different terms.

The quiver $\mathbb Z_{\geq 0}Q$ restricted to the vertices of $
\mathcal I^+$, gives the Auslander-Reiten quiver for the category of
representations of $Q^\op$.  If we restrict to $\mathcal I$ instead, we get
$n$ additional vertices added to the righthand end.  We understand this as
the Auslander-Reiten quiver of a full subcategory of $D^b(\rep Q^\op)$,
whose vertices correspond to the indecomposable quiver representations
together with the $P_j[1]$ where $P_j$ is the projective representation at
vertex $j$ and $[1]$ is the shift functor.  We write $W_{ij}$ for the object in $D^b(\rep Q^\op)$ corresponding
to the vertex $(i,j)\in \mathcal I$.
The indecomposable projective $P_j$ is $W_{0j}$.
As is well known, $\udim(i,j)$, as defined in the previous section, is the dimension vector of $W_{ij}$ if $(i,j)\in \mathcal I^+$; to include the cases of $(i,j)\in \mathcal I\setminus \mathcal I^+$ as well, we can say that $\udim(i,j)$ is the class in the Grothendieck group of $D^b(\rep Q^\op)$ of $W_{ij}$.  

Because it will be useful later in the paper, we will
begin by weakening the hypothesis on the tuple $\cc$: we 
will begin by assuming that its entries are non-negative, rather than all being
strictly positive.  
Let
$$M_\cc=\bigoplus_{(i,j)\in \mathcal I^+}W_{ij}^{\oplus c_{ij}}$$

Representation theory gives us a natural point $\vv_\cc$ in $V$.  The coordinates of this point are defined by:
$$v_{ij}= \dim\Hom(W_{ij},M_\cc).$$

\begin{lemma} The point $\vv_\cc$ is in $\mathbb E_\cc$. \end{lemma}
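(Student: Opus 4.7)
\emph{Plan.} The plan is, for each $(i,j) \in \mathcal I^+$, to verify that $\vv_\cc$ satisfies the corresponding $\cc$-deformed mesh relation, using the Auslander--Reiten theory of $D^b(\rep Q^\op)$. Under the identification, recalled just before the lemma, of $\mathbb Z_{\geq 0}Q$ restricted to $\mathcal I$ with the AR-quiver of the relevant subcategory of $D^b(\rep Q^\op)$, each mesh at $(i+1,j)$ with $(i,j)\in\mathcal I^+$ corresponds to an Auslander--Reiten triangle
\[
W_{ij}\longrightarrow \bigoplus_{(i,j)\to(i',j')\to(i+1,j)} W_{i',j'} \longrightarrow W_{i+1,j} \longrightarrow W_{ij}[1],
\]
with $\tau W_{i+1,j}=W_{ij}$.

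The central step is the standard alternating-sum identity for AR-triangles: for any object $X$ in the subcategory,
\[
\dim\Hom(W_{ij},X) \;-\; \sum_{(i',j')}\dim\Hom(W_{i',j'},X) \;+\; \dim\Hom(W_{i+1,j},X) \;=\; m(W_{ij},X),
\]
where $m(W_{ij},X)$ denotes the multiplicity of $W_{ij}$ as a direct summand of $X$. I would derive this by applying the contravariant functor $\Hom(-,X)$ to the AR-triangle: the almost-split property identifies the cokernel of $\Hom(\bigoplus W_{i',j'},X)\to\Hom(W_{ij},X)$ with the space of maps $W_{ij}\to X$ that do not factor through the almost-split map, whose dimension is exactly $m(W_{ij},X)$; the remaining terms of the long exact sequence contribute nothing because $\rep Q^\op$ is hereditary. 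Specialising to $X=M_\cc=\bigoplus_{(k,\ell)\in\mathcal I^+}W_{k\ell}^{\oplus c_{k\ell}}$ and using additivity of $\Hom$ in the second argument, one gets $m(W_{ij},M_\cc)=c_{ij}$, and the identity becomes exactly the $\cc$-deformed mesh relation at $(i,j)$.

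The main obstacle I anticipate is justifying the alternating-sum identity rigorously when $(i+1,j)\in\mathcal I\setminus\mathcal I^+$, so that $W_{i+1,j}$ is a shifted projective $P_k[1]$ and the AR-triangle lives ``across the heart'' of $D^b(\rep Q^\op)$. In the easier case $(i+1,j)\in\mathcal I^+$, the triangle is induced by an honest short exact sequence in $\rep Q^\op$ and the identity reduces to the classical dimension formula for AR-sequences over a hereditary algebra. In the harder case, one has to unpack the relevant derived Homs or appeal to Serre/AR-duality in $D^b(\rep Q^\op)$, but the identity is still a formal consequence of almost-splitness of the AR-triangle together with hereditariness.
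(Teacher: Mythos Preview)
Your proposal is correct and follows essentially the same approach as the paper's own proof: apply $\Hom(-,M_\cc)$ to the Auslander--Reiten triangle $W_{ij}\to E_{ij}\to W_{i+1,j}\to W_{ij}[1]$ and use the almost-split property to identify the cokernel of $\Hom(E_{ij},M_\cc)\to\Hom(W_{ij},M_\cc)$ with the multiplicity $c_{ij}$ of $W_{ij}$ in $M_\cc$. Your anticipated obstacle in the case $(i+1,j)\notin\mathcal I^+$ does not actually arise: the paper treats all $(i,j)\in\mathcal I^+$ uniformly, since the only vanishing needed is $\Hom(W_{ij}[1],M_\cc)=0$, and this follows simply from $W_{ij}$ and $M_\cc$ both lying in $\rep Q^\op$.
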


\begin{proof}  Suppose that $(i,j)\in\mathcal I^+$.
Let $E_{ij}$ be the direct sum of all the $W_{i'j'}$ for $(i',j')$ on a path of length two between 
$(i,j)$ and $(i+1,j)$.  
We therefore have an Auslander-Reiten triangle in $D^b(\rep Q)$:
$$ W_{ij} \rightarrow E_{ij} \rightarrow W_{i+1,j} \rightarrow
W_{ij}[1]$$
We must verify, for each $(i,j)\in \mathcal I^+$ that 
\begin{equation}\label{eq}
\dim(\Hom(W_{ij},M_\cc)) -\dim(\Hom(E_{ij},M_\cc)) + \dim(\Hom(W_{i+1,j},M_\cc)) = c_{ij}
\end{equation}
We know that the following sequence is exact except at the righthand end:
$$0=\Hom(W_{ij}[1],M_\cc) \rightarrow \Hom(W_{i+1,j},M_\cc) \rightarrow \Hom(E_{ij},M_\cc) \rightarrow \Hom(W_{ij},M_\cc)$$
Thus, the lefthand side of (\ref{eq}) is nothing but the dimension of the quotient of $\Hom(W_{ij},M_\cc)$ by the 
image of $\Hom(E_{ij},M_\cc)$.  By the definition of Auslander-Reiten triangles, any map from $W_{ij}$ to a summand
of $M_\cc$ which is not an isomorphism factors through $E_{ij}$.  The lefthand side of (\ref{eq}) is therefore the
multiplicity of $W_{ij}$ in $M_\cc$, which is exactly $c_{ij}$.  
\end{proof}
We can now say $$\mathbb E_c=\vv_\cc + \mathbb E_\uzero.$$

Here, by $\mathbb E_\uzero$, we mean the points of $V$ satisfying the (\uzero-deformed) mesh relations.  We see that $\mathbb E_\uzero$ is an $n$-dimensional vector space.  For $1\leq k \leq n$, define a vector $\dd^k=(d^k_{ij})_{ij\in\mathcal I}$ by setting $d^k_{ij}=\udim(i,j)_k$.  Then $\dd^1,\dots,\dd^n$ are a basis for
$\mathbb E_\uzero$.  

Similarly, for $1\leq k\leq n$, and $(i,j)\in\mathcal I$, let 
$g^k_{ij}$ be the signed multiplicity of $P_k$ in a projective resolution for
$W_{ij}$.  That is to say, we take a resolution
\begin{equation}\label{eq2}
  P^1 \rightarrow P^0\rightarrow W_{ij} \rightarrow P^1[1] \end{equation}
and take $g^k_{ij}$ to be the multiplicity of $P_k$ in $P^0$ minus
its multiplicity in $P^1$.  Now define the vector $\gg^k=(g^k_{ij})_{ij\in\mathcal I}$.  
Then the set of $\gg^k$ are also a basis for $\mathbb E_0$.  This is clear because (\ref{eq2}) implies that
$\udim W_{ij}=\udim P^0-\udim P^1$, from which we see that we can calculate
the $g$-vector of $W_{ij}$ by expressing $\udim W_{ij}$ in the basis
$\udim P_1, \udim P_2, \dots, \udim P_n$ of $\mathbb Z^n$.  It follows that the collection of vectors
$\{\gg^k\}$ are related to the vectors $\{\dd^k\}$ by a change of basis. 

As we have already mentioned, there is a relation of compatibilty of cluster variables, which we can take as defining a notion of compatibility for elements of $\mathcal I$.  Two elements $\alpha,\beta$ of $\mathcal I$ are
compatible iff $\Ext^1(W_\alpha,W_\beta)=0=\Ext^1(W_\beta,W_\alpha)$. See ~\cite[Eq. (3.3)]{MRZ} and 
\cite[Corollary 4.3]{BMRRT}.

The following is the key lemma.

\begin{lemma} \label{key} 
Suppose the tuple $\cc$ consists of strictly positive 
integers.  Let $\alpha,\beta\in\mathcal I$.  If they are incompatible,
  then there is no point of $\mathbb U_\cc$ lying on the intersection of the hyperplanes $p_\alpha=0$ and $p_\beta=0$.\end{lemma}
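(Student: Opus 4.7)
The plan is to parametrize $\mathbb{E}_\cc$ via the $g$-vector basis of $\mathbb{E}_\uzero$ and translate the lemma into a statement about pairings with $g$-vectors. Since $\mathbb{E}_\cc = \vv_\cc + \mathbb{E}_\uzero$ and $\gg^1,\dots,\gg^n$ span $\mathbb{E}_\uzero$, every point of $\mathbb{E}_\cc$ has the form $\vv_\cc + \sum_k b_k \gg^k$ for some $\mathbf{b}\in\mathbb{R}^n$, and for every $(i,j)\in\mathcal I$,
$$p_{ij} \;=\; \dim\Hom(W_{ij},M_\cc) + \langle g(i,j),\mathbf{b}\rangle.$$
Assume toward contradiction that some $\mathbf{b}$ produces a point of $\mathbb{U}_\cc$ at which $p_\alpha = p_\beta = 0$. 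Incompatibility of $\alpha,\beta$ means $\Ext^1(W_\alpha,W_\beta)\neq 0$ or $\Ext^1(W_\beta,W_\alpha)\neq 0$; swapping the two if necessary, assume the first. Since $\rep Q^\op$ is hereditary, if $\beta\in\mathcal I\setminus\mathcal I^+$, say $W_\beta = P_j[1]$, then $\Ext^1(W_\alpha,W_\beta) = \Ext^2(W_\alpha,P_j) = 0$; thus $\beta\in\mathcal I^+$.

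Choose a nonzero $\xi\in\Ext^1(W_\alpha,W_\beta)$ and consider the triangle
$$W_\beta \to E \to W_\alpha \xrightarrow{\xi} W_\beta[1]$$
in $D^b(\rep Q^\op)$. A key preliminary step is to check that $E$ decomposes in $\mathcal I$, say $E\cong\bigoplus_l W_{\alpha_l}^{\oplus m_l}$ with $m_l>0$ and $\alpha_l\in\mathcal I$. When $\alpha\in\mathcal I^+$ this is automatic, since $E$ is then a genuine extension of modules; when $\alpha\in\mathcal I\setminus\mathcal I^+$, i.e.\ $W_\alpha = P_k[1]$, the class $\xi$ is a morphism $P_k\to W_\beta$, and an octahedral argument applied to its factorization through its image yields $E\cong K[1]\oplus\operatorname{coker}(\xi)$, where $K\subseteq P_k$ is a projective submodule, so again every summand of $E$ lies in $\mathcal I$. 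Additivity of $g$-vectors on triangles then gives $g(\alpha)+g(\beta) = \sum_l m_l\, g(\alpha_l)$. Applying $\Hom(-,M_\cc)$ to the triangle (using the vanishing of $\Ext^{-1}$ in the heart) produces an exact sequence
$$0 \to \Hom(W_\alpha,M_\cc) \to \Hom(E,M_\cc) \to \Hom(W_\beta,M_\cc) \xrightarrow{\xi^*} \Ext^1(W_\alpha,M_\cc),$$
so $\sum_l m_l\, v_{\alpha_l} = \dim\Hom(E,M_\cc) \leq v_\alpha + v_\beta$, with equality exactly when $\xi^*=0$.

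Combining these ingredients with the hypothesis $p_\alpha = p_\beta = 0$ gives
$$\sum_l m_l\,p_{\alpha_l} \;=\; \sum_l m_l\,v_{\alpha_l} + \langle g(\alpha)+g(\beta),\mathbf{b}\rangle \;=\; \dim\Hom(E,M_\cc) - v_\alpha - v_\beta \;\leq\; 0.$$
Because $m_l>0$ and every $p_{\alpha_l}\geq 0$ on $\mathbb{U}_\cc$, all inequalities must be equalities; in particular $\xi^*=0$. Here the strict positivity of $\cc$ delivers the contradiction: $c_\beta>0$ and $\beta\in\mathcal I^+$ together make $W_\beta$ a direct summand of $M_\cc$, so the inclusion $\iota\colon W_\beta\hookrightarrow M_\cc$ is split, and hence the induced map $\iota_*\colon\Ext^1(W_\alpha,W_\beta)\to\Ext^1(W_\alpha,M_\cc)$ is a split injection. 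By functoriality of the connecting map, $\xi^*(\iota) = \iota_*(\xi) \neq 0$, contradicting $\xi^*=0$. The step I expect to require the most care is the decomposition of $E$ in the mixed case where $W_\alpha$ is a shifted projective; the remainder is essentially bookkeeping combining the long exact sequence bound with the non-negativity constraints defining $\mathbb{U}_\cc$.
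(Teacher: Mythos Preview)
Your argument is correct and follows essentially the same route as the paper's: pick a non-split triangle from the nonvanishing $\Ext^1$, apply $\Hom(-,M_\cc)$ to bound $\dim\Hom(E,M_\cc)$ above by $v_\alpha+v_\beta$ (strictly, using $c_\beta>0$), and combine this with additivity on triangles to force a negative coordinate. The only differences are cosmetic: the paper parametrizes $\mathbb E_\uzero$ via the dimension-vector basis $\{\dd^k\}$ rather than $\{\gg^k\}$, obtains the strict inequality up front instead of arguing by equality-then-contradiction, and does not pause to verify that the indecomposable summands of $E$ lie in $\mathcal I$ when $W_\alpha$ is a shifted projective --- a point you handle more carefully.
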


\begin{proof} If $\alpha$ and $\beta$ are incompatible, then
  $\Ext^1(W_\alpha,W_\beta)\ne 0$ or
$\Ext^1(W_\beta,W_\alpha)\ne 0$.  Without loss of generality, suppose the former.  This implies, in particular, that $\beta \in \mathcal{I}^+$. Choose a non-split triangle $$W_\beta \rightarrow E \rightarrow W_\alpha \rightarrow W_\beta[1]$$.

Suppose that $\xx=(x_\gamma)_{\gamma\in\mathcal I} \in\mathbb E_\cc$.  Since $\mathbb E_\cc=\vv_\cc+\mathbb E_\uzero$, and 
$\dd^1,\dots,\dd^n$ span $\mathbb E_\uzero$, there
is some $n$-tuple $(m_1,\dots,m_n)$ such that $\xx=\vv_\cc+\sum_{k=1}^n m_k\dd^k$ or in other words, for all $\gamma\in \mathcal I$,
we have $$x_\gamma= \dim\Hom(W_\gamma,M_\cc)+\sum_{k=1}^n m_k \udim(\gamma)_k $$

Now suppose that $x_\alpha=x_\beta=0$.  Note that $\dim E_i=\dim (W_\beta)_i + \dim (W_\alpha)_i$.  Also note that we have the Hom long exact sequence

$$\Ext^{-1}(W_\beta,M_\cc) \rightarrow \Hom(W_\alpha,M_\cc) \rightarrow \Hom(E,M_\cc) \rightarrow \Hom(W_\beta,M_\cc)
\rightarrow \Ext^1(W_\alpha,M_\cc)$$

On the lefthand end $\Ext^{-1}(W_\beta,M_\cc)=\Hom(W_\beta,M_\cc[-1])=0$.

Since $c_\beta>0$, the map from $\Hom(W_\beta,M_\cc)$ to $\Ext^1(W_\alpha,M_\cc)$ is non-zero.
Thus, $\dim \Hom(E,M_\cc)< \dim \Hom(W_\alpha,M_\cc)+\dim\Hom(W_\beta,M_\cc)$.

Therefore

\begin{eqnarray*}&&\dim\Hom(E,M_\cc)+\sum_{k=1}^n m_k\dim E_k
\\ &&\qquad < \dim\Hom(W_\alpha,M_\cc)+\dim\Hom(W_\beta,M_\cc) + \sum_{k=1}^n m_k (\dim (W_\alpha)_k+\dim (W_\beta)_k)
\\ &&\qquad = x_\alpha + x_\beta= 0 \end{eqnarray*}

But the quantity on the lefthand side is a just a sum of the coordinates of
$\xx$ evaluated at the summands of $E$, weighted by their multiplicities.  Thus at least one of the coordinates of $\xx$ is negative, so $\xx$ is not in
$\mathbb U_\cc$.\end{proof}

\begin{lemma}\label{bounded} $\mathbb U_c$ is bounded. \end{lemma}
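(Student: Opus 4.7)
The plan is to show that the recession cone of $\mathbb U_\cc$ is trivial. Since $\mathbb U_\cc$ contains the point $\vv_\cc$ (whose coordinates $\dim\Hom(W_\alpha,M_\cc)$ are all non-negative), it is non-empty, and a non-empty polyhedron is bounded precisely when its recession cone is $\{0\}$. Because $\mathbb E_\cc = \vv_\cc + \mathbb E_\uzero$, this recession cone is exactly the set of $\dd \in \mathbb E_\uzero$ all of whose coordinates are non-negative.

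Using the basis $\dd^1,\dots,\dd^n$ of $\mathbb E_\uzero$ already constructed above, I would write any such $\dd$ as $\sum_k m_k \dd^k$ for some $m = (m_1,\dots,m_n) \in \mathbb R^n$, so that for every $\alpha \in \mathcal I$,
\[ \dd_\alpha \;=\; \sum_k m_k \udim(\alpha)_k \;=\; \langle m,\, \udim W_\alpha\rangle.\]
The plan is then to exploit the two extremal families of vertices. For each $j$, the vertex $(0,j)\in \mathcal I^+$ satisfies $W_{0j}=P_j$, so $\dd_{(0,j)}\geq 0$ reads $\langle m,\udim P_j\rangle\geq 0$. Dually, each $\alpha \in \mathcal I \setminus \mathcal I^+$ corresponds to some shifted projective $P_k[1]$, so $\udim W_\alpha = -\udim P_k$ and the inequality $\dd_\alpha\geq 0$ becomes $\langle m,\udim P_k\rangle\leq 0$. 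Letting $j$ and $k$ range over $\{1,\dots,n\}$, we deduce $\langle m,\udim P_\ell\rangle=0$ for every $\ell$.

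Finally, since the dimension vectors $\udim P_1,\dots,\udim P_n$ of the indecomposable projectives form a basis of $\mathbb R^n$ (the Cartan matrix of a hereditary algebra is invertible), the only solution is $m=0$, whence $\dd=0$ and the recession cone is trivial. The main step to check carefully is the bookkeeping for $\mathcal I \setminus \mathcal I^+$, namely that its $n$ elements really correspond to the $P_k[1]$ and hence have dimension vectors exactly equal to $-\udim P_k$; this is built into the construction of the extended Auslander--Reiten quiver at the start of this section. Once that identification is in place, the argument reduces to a short finite-dimensional linear algebra computation, and notably does not require the deformation parameters $c_{ij}$ to enter beyond ensuring non-emptiness.
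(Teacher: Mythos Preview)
Your argument is correct and follows essentially the same strategy as the paper's proof: constrain the linear part of $\mathbb E_\cc$ by looking at both the initial slice (the $P_j$) and the final slice (the $P_k[1]$). The only cosmetic difference is that the paper works in the $\gg^k$ basis (where the coordinates at these two slices are literally $\pm\delta_{jk}$, so one reads off bounds $-(v_\cc)_{0j}\le m'_j\le 0$ directly), whereas you work in the $\dd^k$ basis and phrase it as showing the recession cone $\mathbb E_\uzero\cap\mathbb R_{\ge 0}^{\mathcal I}$ is trivial, which requires the additional (standard) fact that $\{\udim P_\ell\}$ is a basis.
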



\begin{proof}
If $\xx$ is in $\mathbb U_\cc$ then, as in the previous proof, there is an
$n$-tuple $(m'_1,\dots,m'_n)$ such that $$\xx=\vv_\cc+\sum_{k=1}^n
m'_k g^k.$$  

Consider what this equation says at some coordinate 
$\alpha\in\mathcal I\setminus \mathcal I^+$.  
Note first that $(\vv_\cc)_\alpha=0$.  
Of course, $x_\alpha\geq 0$, since we assumed that $\xx\in\mathbb U_\cc$. 
Since the $g$-vectors of the $P_j[1]$ are negative standard basis vectors, $g^k_\alpha$ is non-positive, and 
it follows that 
the $m'_k$ must be non-positive.

Repeating the same argument
with the projectives rather than the shifted projectives, and taking into
account the fact that $(v_\cc)_{0j}>0$, we conclude that each of the $m'_k$
is bounded between $0$ and some negative number $B$.  Therefore $\mathbb U_\cc$ is bounded. \end{proof}

\begin{proof}[Proof of Theorem \ref{th-one}]
We begin by establishing that the vertices of 
$\mathbb U_\cc$ correspond bijectively to maximal 
compatible sets in $\mathcal I$, with the bijection sending
the vertex to the indices of the hyperplanes $p_\alpha=0$
on which it lies.  

Since $\mathbb E_c$ is $n$-dimensional, a vertex of 
$\mathbb U_c$ must lie on at least $n$ facets, and therefore
on at least $n$ hyperplanes of the form $p_\alpha=0$.  By
Lemma \ref{key}, the collection of hyperplanes 
corresponding to a vertex must be compatible.  The maximal
compatible sets of $\mathcal I$ are exactly the compatible
sets of size $n$.  Thus, every vertex of $\mathbb U_c$ 
corresponds to a cluster.

We now argue by induction that every cluster corresponds to a vertex of
$\mathbb U_\cc$.  Let us suppose that we have a collection of $n$ compatible indices from $\mathcal I$, such as 
$T=\{\alpha,\alpha_1,\dots,\alpha_{n-1}\}$, and another collection of $n$ compatible indices 
$T'=\{\alpha',\alpha_1,\dots,\alpha_{n-1}\}$.  Suppose that $\mathbb U_\cc$ has a vertex $q_T$ at the intersection of the facets corresponding to $T$.  Consider moving along the ray from $q_T$ where the facets corresponding to $\alpha_1,\dots,\alpha_{n-1}$ intersect.  Since $\mathbb U_\cc$ is bounded by Lemma \ref{bounded}, we must eventually hit another hyperplane bounding $\mathbb U_\cc$.
By Lemma \ref{key}, it must be a
hyperplane which is compatible with $\alpha_1,\dots,\alpha_{n-1}$; the only one is the
the hyperplane corresponding to $\alpha'$.  This intersection is a vertex of $\mathbb U_\cc$ corresponding to $T'$.  

We therefore know that $T'$ also corresponds to a vertex of $\mathbb U_\cc$. Since
the clusters are connected by mutations, every cluster corresponds to a vertex
of $\mathbb U_\cc$, and since by following edges, we only ever get to vertices
that correspond to clusters (never to an edge that goes to infinity, or
to a vertex that doesn't correspond to a cluster), all the vertices of
$\mathbb U_\cc$ correspond to clusters.  Further, the one-skeleton of $\mathbb U_\cc$ is
the cluster exchange graph.

We have therefore established point (3) of the theorem.
By Lemma \ref{key}, 
each face of $\mathbb U_c$ corresponds to a compatible set (necessarily all different),
and since
all the maximal compatible sets correspond to vertices of
$\mathbb U_\cc$, every compatible
set does correspond to a face. This establishes (1) and (2).  
(4) is straightforward.  Given a point in $\underline{y}\in \mathbb R^n$,
we can uniquely find an $\xx$ such that $\pi(\xx)=\underline{y}$ by inductively solving the $\cc$-deformed mesh relations, working from right to left through
the quiver for $\mathcal I$.  

Finally, we establish (5). Let $G$ be the $n\times |\mathcal I|$ matrix whose $(i,j)$-th column is the $g$-vector
$g(i,j)$.  Let $\sigma$ be the affine map from $\mathbb R^n$ to
$\mathbb E_\cc\subset V$ which is a section of $\pi$. This map is given by right multiplying by
$-G$ and adding $\vv_\cc$.  This induces a map from the 
tangent vectors of $V$ to tangent vectors of $\mathbb R^n$
(which we identify with $V$ and $\mathbb R^n$ respectively, via our fixed bases for each of them).
Linear algebra tells us that the map on tangent vectors is given by \emph{left} multiplying by $-G$.  Write $\ee_{ij}$ for the standard basis vector corresponding
to $(i,j)\in\mathcal I$.  Since $-\ee_{ij}$ is orthogonal to the hyperplane $p_{ij}=0$, and points away from $\mathbb U_\cc$, it image under left multiplication by $-G$ generates
the outer normal ray corresponding to this facet.  But
clearly $(-G)(-\ee_{ij})=g(i,j)$, as desired.  
\end{proof}

We close the section by commenting on the implications of 
Theoem \ref{th-one} for different assumptions on $\cc$.  

Clearly, if we assume that the constants $\cc$ are positive
rational numbers, Theorem \ref{th-one} still holds.  By 
continuity, it also holds for positive real numbers.  
Now consider the case that the some of the constants $\cc$
are zero.  

\begin{corollary}\label{cor1} If the constants $\cc$ are non-negative, then every vertex of $\mathbb U_\cc$ lies on 
a collection of coordinate hyperplanes which is the union of one or more maximal compatible sets in $\mathcal I$.  The facet normals to the facets of $\mathbb A_\cc$ are a subset of the $g$-vectors. \end{corollary}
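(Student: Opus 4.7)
\emph{Proof plan.} The strategy is a perturbation argument. Fix a sequence of strictly positive tuples $\cc^{(m)}:=\cc+\epsilon_m\mathbf{1}$ with $\epsilon_m\downarrow 0$. Each $\cc^{(m)}$ satisfies the hypothesis of Theorem~\ref{th-one}, so $\mathbb U_{\cc^{(m)}}$ is a simple $n$-polytope whose vertices are indexed by the clusters of $\mathcal A(Q)$ and whose facets are cut out by the hyperplanes $p_\alpha=0$, $\alpha\in\mathcal I$.

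First I would check that $\mathbb U_{\cc^{(m)}}\to\mathbb U_\cc$ in Hausdorff distance. The point $\vv_\cc$ depends linearly on $\cc$ by construction, so the affine space $\mathbb E_\cc=\vv_\cc+\mathbb E_{\uzero}$ translates continuously in $\cc$; the half-space inequalities $p_\alpha\geq 0$ are independent of $\cc$; and Lemma~\ref{bounded} supplies a uniform bound on the $\mathbb U_{\cc^{(m)}}$. Standard compactness arguments then show that every vertex $v$ of $\mathbb U_\cc$ is a limit of vertices of $\mathbb U_{\cc^{(m)}}$, and after passing to a subsequence (using the finiteness of the set of clusters), we may assume the approximating vertices are labelled by a common cluster.

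Fix such a $v$ and let $T^{(1)},\dots,T^{(k)}$, with $k\geq 1$, be the clusters arising as labels of approximating sequences converging to $v$. For each $\alpha\in\bigcup_i T^{(i)}$, the coordinate $p_\alpha$ vanishes along the corresponding approximating sequence, so $p_\alpha(v)=0$ in the limit. Conversely, if $p_\alpha(v)=0$, then $v$ is a vertex of the face $F_\alpha:=\{p_\alpha=0\}\cap\mathbb U_\cc$; applying the same Hausdorff-convergence argument to $\{p_\alpha=0\}\cap\mathbb U_{\cc^{(m)}}\to F_\alpha$, one obtains a sequence of vertices of the former converging to $v$, each of which corresponds to a cluster containing $\alpha$. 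Hence $\alpha\in T^{(i)}$ for some $i$, so the set of coordinate hyperplanes containing $v$ is exactly $\bigcup_i T^{(i)}$, a union of maximal compatible sets.

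The facet-normals assertion then follows directly from part~(5) of Theorem~\ref{th-one}: every facet of $\mathbb U_\cc$ is of the form $\{p_\alpha=0\}\cap\mathbb U_\cc$ for some $\alpha\in\mathcal I$, and the identification of the outer normal of $\pi(F_\alpha)$ with $g(\alpha)$ is a purely linear-algebraic computation involving the section of $\pi$, depending on $\cc$ only through the translation $\vv_\cc$, which drops out on tangent vectors. Thus every facet normal of $\mathbb A_\cc$ is a $g$-vector, although only a subset of the $g$-vectors need arise, since a facet of $\mathbb U_{\cc^{(m)}}$ can degenerate in the limit when some $c_\alpha=0$. I expect the main technical hurdle to be the clean verification of the Hausdorff convergence of vertices, and more delicately of facet sub-polytopes, under perturbation — essentially, controlling how approximating vertices collide at a single vertex of the limit; once that is in hand the argument is routine.
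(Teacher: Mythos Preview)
Your perturbation argument is correct and follows precisely the deformation idea sketched in the paper's own proof, which merely says that as one degenerates from strictly positive $\cc'$ to non-negative $\cc$ ``some vertices can merge, and some facets can collapse.'' You have supplied the details the paper omits (Hausdorff convergence, extraction of subsequences via finiteness of clusters, the facet-level convergence for the converse inclusion), so this is essentially the same approach carried out with greater care.
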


\begin{proof}  We can imagine what happens if we begin with
$\cc'$ where all values are positive, and then deform 
gradually to a nearby vector $\cc$, where some values become zero.  What can happen is that some vertices can merge,
and some facets can collapse to something which is no longer codimension 1.  The results are as described in the statement of the corollary. \end{proof}

\section{Background on cluster algebras}\label{cluster-intro}

Let $Q$ be a quiver without loops or oriented two-cycles.  As already mentioned, there is an associated
cluster algebra $\mathcal A(Q)$, whose cluster variables are in natural bijection with the elements of $\mathcal I$, as we now explain.

A \emph{slice} in $\mathcal I$ is a subset $\mathcal T$ of
$\mathcal I$ such that:
\begin{itemize}
\item For each $j$ with $1\leq j\leq n$, there is exactly one element of
  $\mathcal T$ of the form $(i,j)$ for some $i$, and
\item if $j$ and $j'$ are adjacent vertices of $Q$, and $(i,j)$ and
  $(i',j')$ are the corresponding elements of $\mathcal T$, then
  $(i,j)$ and $(i',j')$ are adjacent in $\mathcal I$.
\end{itemize}

The collection of all $(0,j)$ form a slice, corresponding to the projective
representations, as does the collection $\mathcal I\setminus \mathcal I^+$,
corresponding to the shifted projectives.  We refer to $\{(0,j)\}$ as the
\emph{initial slice} and 
$\mathcal I\setminus \mathcal I^+$ as the \emph{final slice}.  

Note that any slice in $\mathcal I$ has at least one element $(i,j)$ such that
all the arrows between it and the other elements of the slice are oriented from
$(i,j)$.  We call such an $(i,j)$ a \emph{source} in the slice.  
Any slice other than the final slice has a source which
is in $\mathcal I^+$.

We assign the intial cluster variable $x_j$ to $(0,j)\in \mathcal I$.
Suppose we have a slice $\mathcal T$ which is not the final slice,
and the elements of $\mathcal T$ are
associated to the cluster variables of a cluster.  Let $(i,j)$ be a source
in the slice which is in $\mathcal I^+$.  We can therefore replace $(i,j)$ by
$(i+1,j)$ to obtain a new slice $\mathcal T'$.  
Mutate the cluster at the variable
corresponding to $(i,j)$, and associate the resulting cluster variable to
the vertex $(i+1,j)$. This provides a cluster associated to the slice
$\mathcal T'$.  As we proceed from the initial slice to the final slice,
we may well have choices of source.  However, these choices do not matter.
We have the following theorem, essentially from \cite{BMRRT}:

\begin{theorem} The above procedure results in a well-defined map from
  $\mathcal I$ to the cluster variables of $\mathcal A(Q)$, independent
  of choices.\end{theorem}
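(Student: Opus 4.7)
The plan is to prove well-definedness by a local confluence argument, using the cluster category interpretation of $\mathcal I$. To each slice $\mathcal T$ I associate the object $T_{\mathcal T}=\bigoplus_{(i,j)\in \mathcal T}W_{ij}$ in the cluster category $\mathcal C_Q$ built from the representation-theoretic data of Section~3. The slice axioms force $\mathrm{Ext}^1$ to vanish between any two summands (adjacent-in-$Q$ pairs are adjacent in $\mathcal I$, hence related by the AR structure; other pairs vanish trivially), so $T_{\mathcal T}$ is a basic cluster-tilting object. By \cite{BMRRT}, its summands are in canonical bijection with the variables of some cluster, and the initial slice corresponds to the initial cluster $\{x_1,\dots,x_n\}$.

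The next step is to match source mutation with cluster mutation. If $(i,j)$ is a source of $\mathcal T$, the two exchange triangles for $W_{ij}$ in $\mathcal C_Q$ are the AR triangle $W_{ij}\to E_{ij}\to W_{i+1,j}$ and its reverse, whose middle terms are direct sums of $W_{i',j'}$ over length-two paths through $(i,j)$ or $(i+1,j)$. The source hypothesis ensures every such $(i',j')$ lies in $\mathcal T\setminus\{(i,j)\}$, so both middle terms are in $\mathrm{add}(T_{\mathcal T}/W_{ij})$; BMRRT mutation at $W_{ij}$ therefore replaces it by $W_{i+1,j}$, producing $T_{\mathcal T'}$. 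Under the bijection with cluster variables, this corresponds exactly to Fomin–Zelevinsky mutation at $x_{(i,j)}$, producing the variable we assign to $(i+1,j)$.

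Local confluence is the crux. Suppose $(i,j)$ and $(i',j')$ are distinct sources of $\mathcal T$. If $j$ and $j'$ were adjacent in $Q$, the slice condition would make $(i,j)$ and $(i',j')$ adjacent in $\mathcal I$, contradicting either vertex being a source; hence $j$ and $j'$ are non-adjacent in $Q$. By the BMRRT correspondence between the Gabriel quiver of $\mathrm{End}(T_{\mathcal T})$ and the exchange matrix at the associated cluster, the entries $b_{(i,j),(i',j')}$ and $b_{(i',j'),(i,j)}$ vanish, so the two cluster mutations commute. Mutating in either order yields the same slice $\mathcal T''$ and the same new cluster variables at $(i+1,j)$ and $(i'+1,j')$.

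For global confluence, observe that each source mutation strictly increases the bounded quantity $\sum_{(i,j)\in\mathcal T}i$, so the procedure terminates at the final slice. Newman's diamond lemma then upgrades local confluence to global confluence: any two mutation sequences from the initial slice to a given slice $\mathcal T$ yield the same cluster, so the variable assigned to each $(i,j)\in\mathcal I$ is independent of all choices. The main obstacle is the second step, verifying that the BMRRT mutation of $T_{\mathcal T}$ at a source summand delivers precisely $W_{i+1,j}$; this is the point where one must translate carefully between the AR combinatorics of $\mathcal I$ and the mutation operation in $\mathcal C_Q$.
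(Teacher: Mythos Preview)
The paper does not give its own proof of this theorem; it simply attributes the statement to \cite{BMRRT} and moves on. Your cluster-category reconstruction together with the diamond-lemma confluence argument is a reasonable way to unpack that citation, and the overall strategy---show that source moves are BMRRT mutations, show that two sources sit at non-adjacent vertices so their mutations commute, then invoke Newman's lemma---is sound.

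There is, however, one genuine gap. Your justification that $T_{\mathcal T}$ is cluster-tilting is not valid as written. Saying that $\mathrm{Ext}^1$ between summands at non-adjacent vertices of $Q$ ``vanish[es] trivially'' is not an argument (extensions between indecomposables are not governed by adjacency in the underlying graph), and ``related by the AR structure'' does not by itself force $\mathrm{Ext}^1=0$ for the adjacent pairs either. The conclusion is true---sections of the Auslander--Reiten quiver of a hereditary algebra give tilting modules, hence cluster-tilting objects in $\mathcal C$---but that is a theorem, not a triviality, and in the present setting some slices even mix modules with shifted projectives. Worse, appealing directly to ``slices are clusters'' would be circular, since that is essentially what the theorem asserts. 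The clean fix is to fold this step into your induction: the initial slice consists of the indecomposable projectives and is visibly cluster-tilting; your Step~2 then shows that each source move at $(i,j)$ is a BMRRT mutation (the AR triangle $W_{ij}\to E_{ij}\to W_{i+1,j}$ has $E_{ij}\in\mathrm{add}(T_{\mathcal T}/W_{ij})$ by the source hypothesis), so cluster-tilting propagates to every slice reached along the way. With that reorganisation your argument goes through.
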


\begin{example}\label{ex-a3-d} In the setting of Example~\ref{ex-a3}, the result is the following:


    
\[\begin{tikzpicture}[xscale=1.3, yscale=1]
    \draw (-3,2.4) node (01)   {$x_1$};
    \draw (-3,0)   node (02)   {$x_3$};
    \draw (-2,1.2) node (03)   {$x_2$};
    \draw (-1,2.4) node (11)   {$\frac{x_2+1}{x_1}$};
    \draw (-1,0)   node (12)   {$\frac{x_2+1}{x_3}$};
    \draw (0,1.2)  node (13)   {$\frac{x_2^2+2x_2+x_1x_3+1}{x_1x_2x_3}$};
    \draw (1,2.4)  node (21)   {$\frac{x_1x_3+x_2+1}{x_2x_3}$};
    \draw (1,0)    node (22)   {$\frac{x_1x_3+x_2+1}{x_1x_2}$};
    \draw (2,1.2)  node (23)   {$\frac{x_1x_3+1}{x_2}$};
    \draw [->] (01) -- (03);
    \draw [->] (02) -- (03);
    \draw [->] (03) -- (11);
    \draw [->] (03) -- (12);
    \draw [->] (11) -- (13);
    \draw [->] (12) -- (13);
    \draw [->] (13) -- (21);
    \draw [->] (13) -- (22);
    \draw [->] (21) -- (23);
    \draw [->] (22) -- (23);
\end{tikzpicture} \]

\end{example} 

Let us emphasize that any slice in $\mathcal I$ corresponds to a cluster,
though there are also further clusters which are not slices.  
Note that if we restrict the quiver on $\mathcal I$ to the vertices of
the initial slice, we recover $Q$.  In fact, it is easy to show by 
induction that for any slice, the quiver associated to that cluster
is given by restricting the quiver on $\mathcal I$ to that slice.  

One way to construct a more complicated cluster algebra which is still governed by the same Dynkin combinatorics is to define a new \emph{ice quiver} $Q^\ice$ which is obtained from $Q$ by adding some vertices which are designated as \emph{frozen}.  The new vertices may be connected to the vertices of $Q$ in any way (provided that there are still no oriented two-cycles).
It is standard to assume that there are also no arrows between frozen vertices, but this is not actually important since such arrows play no role.  The cluster algebra associated to an ice quiver, $\mathcal A(Q^\ice)$ is the algebra generated by the initial cluster variables and all cluster variables obtained by all sequences of mutations at non-frozen vertices only.  Note that
there is a cluster variable associated to each frozen vertex.  To distinguish the frozen and unfrozen variables, we generally write $x_1,\dots,x_n$ for the initial unfrozen variables and $y_1,\dots,y_m$ for the frozen variables.  A refined version of the Laurent Phenomenon says that every cluster variable is contained in $\mathbb Z[x_1^\pm,\dots,x_n^\pm,y_1,\dots,y_m]$, see \cite[Theorem 3.3.6]{FWZ}. The unfrozen cluster variables still correspond to the elements of $\mathcal I$ and can be calculated in the same way, by starting with the initial variables associated to the initial slice and then carrying out mutations as before. 

There are two particular ice quivers obtained from $Q$ which are of special importance, one of which we will explain now.  
For each vertex $i$ of $Q$, add a frozen vertex $i'$ and an arrow from $i'$ to $i$.  The resulting quiver is called the \emph{framed quiver} of $Q$; we denote it $Q^\prin$.  To the new vertex $i'$ we associate the frozen cluster variable $y_i$.  
The cluster algebra associated to $Q^\prin$ is called the \emph{cluster algebra with principal coefficients}. We denote it $\mathcal A^\prin(Q)$. 

\begin{example}\label{ex-a3-e}
The framed quiver of $Q$ from Example ~\ref{ex-a3} is as follows, where the frozen vertices appear in squares.
\begin{center}
$Q^\prin: \xymatrix{1 \ar[r] & 2  & 3 \ar[l] \\
\boxed{{1^\prime}} \ar[u]  & \boxed{{2^\prime}} \ar[u]  & \boxed{{3^\prime}} \ar[u] }$
\end{center}
The cluster variables of the corresponding cluster algebra $\mathcal A^\prin(Q)$ are listed in Section \ref{prin}.
\end{example}

The significance of this choice of coefficients will be explained further in Section \ref{prin}.  Briefly, it turns out that from the cluster variables for the cluster algebra with principal coefficients, one can immediately calculate the cluster variables for any system of coefficients.  The cluster algebra with principal coefficients is also essential for defining $F$-polynomials, as we shall explain shortly.

An ice quiver with $n$ unfrozen vertices and $m$ frozen vertices can also be represented as an $(n+m)\times n$ matrix of integers, $\widetilde B_0$.  We associate the $n$ columns and the corresponding first $n$ rows to the $n$ unfrozen vertices of $Q$, and we associate the remaining rows to the frozen vertices.  The entry $(\widetilde B_0)_{ij}$ is the number of arrows from $i$ to $j$ minus the number of arrows from $j$ to $i$.  We see that the first $n$
rows of $\widetilde B_0$ are simply the matrix $B_0$ which we have already seen.  The matrix corresponding to 
$Q^\prin$ consists of $B_0$ with an $n\times n$ identity matrix below it.  

\begin{example}  For our running example, the extended matrix $\widetilde B_0$ corresponding to principal coefficients is the following:
$$\left[\begin{array}{ccc} 0&1&0\\ -1&0&-1\\ 0&1&0 \\ \hline 1&0&0\\0&1&0\\0&0&1\end{array}\right]$$
\end{example}

\section{Submodule polytopes and torsion classes}

For the duration of this section, we work with an arbitrary 
finite-dimensional algebra $A$, thought of as a the path algebra of a quiver $Q$ with 
relations, where the vertices of $Q$ are numbered 1 to $n$.  
We can therefore still consider dimension vectors of
such modules.  

For $X$ an $A$-module, we write $\Po_X$ for the polytope
in $\mathbb R^n$ which is the convex hull of the dimension vectors of 
subrepresentations of $X$. We call $\Po_X$ the \emph{submodule polytope}
of $X$.  
Submodule polytopes for representations of preprojective
algebras play an 
important r\^ole in the paper by Baumann, Kamnitzer and Tingley \cite{BKT} which we are essentially following in this
section.  

A \emph{torsion class} in $A$-mod is a full subcategory closed under extensions and quotients.  If $\mathcal T$ is a torsion 
class, any $A$-module $M$ has a unique largest submodule which is
contained in $\mathcal T$.  We call this the \emph{torsion part} of $M$ with 
respect to $\mathcal T$.  See \cite[Chapter VI]{ASS} for more
background on torsion classes.

The key fact about submodule polytopes is that, in order to find
the vertices of a submodule polytope, we do not need to consider
all submodules: it suffices to consider those submodules which are
torsion parts with respect to some torsion class.  
The following proposition is established in
\cite[Section 3]{BKT}.  Because we need rather less than is
established in \cite{BKT}, we give the simple proof here.

\begin{lemma}[{\cite{BKT}}] \label{BKTlemma} Let $M$ be an $A$-module.  Then $\Po_M$ equals the convex hull of the dimension vectors of the torsion parts
of $M$.  This amounts to saying that, for each vertex $\xx$ of $\Po_M$, there is some torsion class $\mathcal T$ with respect to which the dimension
vector of the torsion part of $M$ equals $\xx$. Further, the submodule 
of $M$ with this dimension vector is unique.  
\end{lemma}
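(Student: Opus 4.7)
The plan is to show that each vertex of $\Po_M$ is realized as the dimension vector of some (unique) torsion part of $M$. Since $\Po_M$ is by definition the convex hull of the dimension vectors of all submodules of $M$, and torsion parts are among those submodules, the nontrivial direction is from vertices to torsion parts. Fix a vertex $\xx$ of $\Po_M$ and choose a linear functional $\theta \in (\mathbb R^n)^*$ that is maximized on $\Po_M$ uniquely at $\xx$; equivalently, $\theta \cdot \udim N' \leq \theta \cdot \xx$ for every submodule $N' \subseteq M$, with equality forcing $\udim N' = \xx$.

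Uniqueness of the submodule with $\udim = \xx$ will follow from the standard sum–intersection identity $\udim(N_1+N_2)+\udim(N_1 \cap N_2) = \udim N_1 + \udim N_2$. If two submodules $N_1, N_2 \subseteq M$ both satisfy $\udim N_i = \xx$, then the $\theta$-values of $N_1 + N_2$ and $N_1 \cap N_2$ (again submodules of $M$) sum to $2\theta\cdot\xx$ but are each at most $\theta\cdot\xx$, so both equal $\theta\cdot\xx$. By uniqueness of the maximizer, $\udim(N_1+N_2) = \udim(N_1 \cap N_2) = \xx$, and since $N_1 \cap N_2 \subseteq N_1+N_2$ with matching dimension vector, these two submodules coincide, forcing $N_1 = N_2$.

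For existence, I propose the torsion class
\[ \mathcal T := \{\, X \in A\textup{-mod} \,:\, \theta \cdot \udim X' \leq \theta \cdot \udim X \text{ for every submodule } X' \subseteq X \,\}. \]
A short, direct verification (using additivity of $\udim$ on short exact sequences, together with the standard trick of splitting a submodule $Y \subseteq X$ along a subobject $X_1 \subseteq X$ via $Y \cap X_1 \subseteq X_1$ and $Y/(Y \cap X_1) \hookrightarrow X/X_1$) shows $\mathcal T$ is closed under both quotients and extensions, hence is a torsion class. The unique submodule $N \subseteq M$ with $\udim N = \xx$ lies in $\mathcal T$, since any submodule of $N$ is also a submodule of $M$ and therefore has $\theta$-value at most $\theta\cdot\xx = \theta\cdot\udim N$.

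To conclude, let $t_{\mathcal T}(M)$ be the torsion part of $M$. Since $N \in \mathcal T$, we have $N \subseteq t_{\mathcal T}(M)$; applying the defining property of $\mathcal T$ to the pair $N \subseteq t_{\mathcal T}(M)$ yields $\theta \cdot \udim N \leq \theta \cdot \udim t_{\mathcal T}(M)$, while $t_{\mathcal T}(M) \subseteq M$ gives the reverse inequality. Equality throughout forces $\udim t_{\mathcal T}(M) = \xx$ by uniqueness of the maximizer, and then $t_{\mathcal T}(M) = N$ by the uniqueness established in the previous paragraph. I expect no genuine obstacle: the main content is simply identifying the torsion class $\mathcal T$ that is forced by $\theta$, after which the convex-geometric fact that a vertex admits a strictly supporting functional meshes cleanly with the sublattice structure of submodules.
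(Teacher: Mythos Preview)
Your proof is correct and follows essentially the same approach as the paper: pick a strictly supporting functional $\theta$ at the vertex $\xx$, form the associated torsion class, and verify that the torsion part of $M$ is the submodule with dimension vector $\xx$. Your torsion class $\{X : \theta\cdot\udim X' \le \theta\cdot\udim X \text{ for all } X'\subseteq X\}$ is exactly the paper's $\{X : \theta(\udim Y)\ge 0 \text{ for all quotients } Y \text{ of } X\}$, just rephrased via $\udim X = \udim X' + \udim(X/X')$; the only genuine variation is that you prove uniqueness independently via the modular identity $\udim(N_1+N_2)+\udim(N_1\cap N_2)=\udim N_1+\udim N_2$, whereas the paper deduces it afterwards from the fact that any submodule of dimension $\xx$ lies in $\mathcal T$ and hence inside the torsion part.
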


\begin{proof} Let $\xx$ be a vertex of $\Po_M$.  It follows that there must be at least one submodule of $M$ whose dimension vector is $\xx$.   Let $N$ be such a submodule.  We will 
show that 
there is a torsion class $\mathcal T$ such that the torsion part of 
$M$ with respect to $\mathcal T$ is $N$.    

Choose a linear form $\theta$ on the space of dimension vectors such 
that the unique point on $\Po_M$ maximizing $\theta$ is $x$.  Define a 
torsion class by 
$$\mathcal T_\theta = \{ X \mid \theta ( \udim Y)\geq 0 \textrm{ for all quotients $Y$ of $X$}\}$$
It is not hard to establish $T_\theta$ is a torsion class (\cite[Proposition 3.1]{BKT}).  
Suppose $L$ is a proper submodule of $N$.  Since $L$ is also a submodule of 
$M$, we know that $\theta(\udim L)<\theta(\udim N)$, so 
$\theta(\udim N/L)>0$, and it follows that $N\in \mathcal T_\theta$. 
Let us write $N'$ for the torsion part of $M$ with respect to 
$\mathcal T_\theta$.  Since $N$ is a submodule of $M$ and is in $\mathcal T_\theta$, we know that $N$ must be a submodule of $N'$.  If it
were a proper submodule, then $\theta(\udim N')<\theta(\udim N)$, so $\theta(\udim N'/N)<0$, contradicting the assumption that $N'\in \mathcal T$.  Thus
$N'=N$.  

For the final statement, suppose that there were two distinct modules 
$N_1,N_2$ with dimension vector $x$.  The above argument shows that
both $N_1$ and $N_2$ are in $\mathcal T$, so the maximal torsion 
part of $M$ with respect to $\mathcal T$ is larger than either of them, which we have established is impossible.  
\end{proof}

\section{Newton polytopes of $F$-polynomials}

By definition, the $F$-polynomial $F_{\alpha}$ for $\alpha\in \mathcal I$
is obtained by taking $x_\alpha^\prin$, 
the cluster variable
associated to $\alpha$ in $\mathcal A^\prin(Q)$, and setting all the $x_i$ to 1.
It is therefore a
polynomial in $y_1,\dots,y_n$. 
It turns out that the cluster variable $x_\alpha^\prin$ can be recovered
from the $F$-polynomial (see \cite[Corollary 6.3]{CA4}), so no information has been 
lost, and at the same time, the $F$-polynomial turns out
to be convenient for another reason: for $(i,j)\in \mathcal I$ with
$i\geq 1$, the 
$F$-polynomial $F_{(i,j)}$ is the generating function for
the submodules of the representation $W_{(i-1,j)}$
in
the following sense:
$$F_{(i,j)}(y_1,\dots,y_n) = \sum_{\ee\leq \udim(W_{(i-1,j)})} \chi(\Gr_\ee(W_{(i-1,j)}) y^\ee$$
We refer to \cite[Eq. (1.6)]{DWZ} for the formula. Here $\ee=(e_1,\dots,e_n)$ is a dimension vector in $\mathbb Z_{\geq 0}^n$,
$\Gr_\ee(X)$ means the quiver Grassmannian of subrepresentations of $X$ whose dimension vector is $\ee$,
and $\chi$ is the Euler characteristic.  We write $y^\ee$ for 
$y_1^{e_1}y_2^{e_2}\dots y_n^{e_n}$.

Let $f\in \mathbb Z[y_1,\dots,y_n]$ be a polynomial.
Let $P$ be the subset of $\mathbb Z^n$ such that 
$f$ can be written as $$f=\sum_{\pp\in P} f_\pp y^\pp$$
with all $f_\pp$ non-zero integers.  That is to say $P$ is the
collection of the the $n$-tuples corresponding to
exponents of terms which appear in $f$.  
The \emph{Newton polytope} of $f$ is then the 
convex hull of the points in $P$.  

Newton polytopes of $F$-polynomials have been studied 
by Brodsky and Stump \cite{BS}.  They give a description in
type $A_n$ and a uniform conjecture for all finite types.
Subsequent to the first appearance of the present paper, this conjecture
  was proved in \cite{JLS}, relying in part on our results.

Newton polytopes of cluster variables have also been
studied, by Sherman and Zelevinsky in rank 2 \cite{SZ},
by Cerulli Irelli for $\widetilde A_2$ \cite{CI}, and
by Kalman in type $A_n$ \cite{K}.  Note that, by \cite[Corollary 6.3]{CA4},
the Newton polytope of a cluster variable is an
affine transformation of the corresponding $F$-polynomial,
so the two questions are quite close.

For $(i,j)\in \mathcal I^+$, let $\ee_{ij}$ denote the 
standard basis vector in $\mathbb R^{\mathcal I^+}$
which has a 1 in position $(i,j)$
and zeros elsewhere. 

\begin{theorem} \label{th-two} $\mathbb A_{\ee_{ij}}$
  is the Newton polytope of $F_{(i+1,j)}$.
\end{theorem}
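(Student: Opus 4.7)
The strategy is to show that $\mathbb{A}_{\ee_{ij}}$ coincides with the submodule polytope $\Po_{W_{ij}}$ of Section 5, and then to identify $\Po_{W_{ij}}$ with the Newton polytope of $F_{(i+1,j)}$ via positivity of Euler characteristics of quiver Grassmannians in Dynkin type.

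For $\cc = \ee_{ij}$ one has $M_\cc = W_{ij}$, so $\vv_\cc$ has coordinates $v_\gamma = \dim \Hom(W_\gamma, W_{ij})$. For $\gamma \in \mathcal{I} \setminus \mathcal{I}^+$ the shifted projective admits no nonzero map to a module, so $v_\gamma = 0$ there; hence $\pi(\vv_\cc) = 0$. For each cluster $C \subset \mathcal{I}$, expand the corresponding vertex as $q_C = \vv_\cc + \sum_k m'_k \gg^k$ in the $g$-vector basis of $\mathbb{E}_{\uzero}$ and solve the $n \times n$ linear system $p_\alpha(q_C) = 0$ for $\alpha \in C$. This gives an explicit formula for $\pi(q_C)$ as an integer combination of the values $\dim \Hom(W_\alpha, W_{ij})$. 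The central claim is then that $\pi(q_C) = \udim t_{\mathcal{T}_C}(W_{ij})$, where $\mathcal{T}_C$ is the torsion class in $\rep Q^\op$ associated to $C$ under the finite-type bijection between clusters and torsion classes.

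Granting this claim, Lemma~\ref{BKTlemma} identifies the vertices of $\Po_{W_{ij}}$ with the dimension vectors of torsion parts of $W_{ij}$, and since every torsion class in Dynkin type arises from a cluster, these are precisely the points $\pi(q_C)$. The two polytopes thus have the same vertex set and agree. It remains to compare $\Po_{W_{ij}}$ with the Newton polytope of $F_{(i+1,j)}$: the latter is the convex hull of those $\ee$ with $\chi(\Gr_\ee(W_{ij})) \neq 0$, and is trivially contained in $\Po_{W_{ij}}$. In Dynkin type every quiver Grassmannian $\Gr_\ee(W_{ij})$ admits an affine paving (for instance via a generic one-parameter subgroup action), so has positive Euler characteristic whenever nonempty, which gives the reverse inclusion.

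The principal obstacle is verifying the identification $\pi(q_C) = \udim t_{\mathcal{T}_C}(W_{ij})$. This will require combining the Auslander-Reiten triangle calculus of the proof of Lemma~\ref{key} with a Hom-dimension formula for the torsion part of $W_{ij}$ with respect to $\mathcal{T}_C$, and making the bijection between clusters and torsion classes explicit enough to match, entry by entry, the solution of the linear system defining $q_C$.
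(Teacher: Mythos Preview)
Your outline matches the paper's approach almost exactly: first identify $\mathbb{A}_{\ee_{ij}}$ with the submodule polytope $\Po_{W_{ij}}$ via the claim $\pi(q_C)=\udim t_{\mathcal T_C}(W_{ij})$, then compare with the Newton polytope of $F_{(i+1,j)}$. Two remarks.

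First, the identification $\pi(q_C)=\udim t_{\mathcal T_C}(W_{ij})$ is indeed the crux, and you have correctly isolated it, but you have not proved it. The paper's argument does not go through Auslander--Reiten triangles as you suggest; instead it uses that the tilting object $T$ corresponding to $C$ gives each $P_i$ a coresolution $0\to P_i\to T_{i0}\to T_{i1}\to 0$ with $T_{i0},T_{i1}\in\operatorname{add}T$. Writing the vertex as $\vv_\cc$ minus a suitable $g$-vector combination, the $i$-th coordinate of $\pi(q_C)$ becomes $\dim\Hom(T_{i0},M_\cc)-\dim\Hom(T_{i1},M_\cc)$, and a short diagram chase (using that $T_{i1}$ is $\Ext$-projective in $\mathcal T_C$ and that any map from a torsion object to $M_\cc$ factors through $tM_\cc$) identifies this with $\dim\Hom(P_i,tM_\cc)=(\udim tM_\cc)_i$.

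Second, your final step invokes affine pavings of quiver Grassmannians to obtain positivity of all Euler characteristics. This is correct but heavier than necessary. You only need nonvanishing at the \emph{vertices} of $\Po_{W_{ij}}$, and Lemma~\ref{BKTlemma} already gives that at each vertex the submodule of that dimension vector is unique; hence the quiver Grassmannian is a single reduced point, with Euler characteristic~$1$. This is exactly the argument the paper uses, and it avoids appealing to any external positivity result.
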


Subsequent to the appearance of the first version of the present paper, this result has been extended to the non-simply laced case \cite{AHL}, by using a folding argument to reduce to the simply laced case, for which they rely on this result.
Before we prove the theorem, we will state and prove a key lemma, and then a proposition.  

\begin{lemma} Let $T$ be a tilting object in the additive hull of $\{W_\alpha\mid \alpha\in\mathcal I\}$.  Let $\mathcal T$ be the corresponding torsion class in $\rep Q^\op$, consisting of all quotients of sums of summands of $T$ which are contained in $\rep Q^\op$. For $M\in \rep Q^\op$, let $tM$ denote the torsion part of $M$ with respect to the torsion class $\mathcal T$.
Let $\qq_T$ be the vertex of $\mathbb U_\cc$ at which the coordinates corresponding to summands of $T$ are zero.  
Then $\pi(\qq_T)=\udim(tM)$.  
\end{lemma}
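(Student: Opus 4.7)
The plan is to characterize $\qq_T$ linearly in terms of $t := \pi(\qq_T)$, and then verify that $\udim(tM_\cc)$ satisfies the same linear characterization.

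First I will derive, for any $\xx \in \mathbb E_\cc$, a general coordinate formula. Writing $\xx = \vv_\cc + \sum_l m_l \dd^l$ and evaluating at the final-slice vertex $\alpha_k$ where $W_{\alpha_k} = P_k[1]$ gives $\pi(\xx)_k = -\sum_l m_l \udim(P_k)_l$. The Grothendieck-group identity $\udim W_\alpha = \sum_k g^k_\alpha \udim P_k$ then yields the clean formula
\[
\xx_\alpha \;=\; \dim\Hom(W_\alpha,M_\cc) \;-\; \langle \pi(\xx),\, g(\alpha)\rangle
\]
for every $\alpha \in \mathcal I$, with $\langle\cdot,\cdot\rangle$ the standard inner product on $\mathbb R^n$.

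Next I will specialize to $\xx = \qq_T$ and enforce the defining vanishing $(\qq_T)_{T_l} = 0$ for each of the $n$ summands $T_l$ of $T$. Splitting $T = T^+ \oplus T^-$ with $T^+ \in \rep Q^\op$ and $T^- = \bigoplus_k P_k[1]$ ranging over some subset of vertices, this produces two families of equations on $t$:
\[
\langle t,\, g(T_l)\rangle = \dim\Hom(T_l, M_\cc) \quad\text{for each summand } T_l \text{ of } T^+,
\]
\[
t_k = 0 \quad\text{for each summand } P_k[1] \text{ of } T^-.
\]

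The heart of the argument is showing that $\udim(tM_\cc)$ satisfies the same equations. For $T_l$ a summand of $T^+$: since $M_\cc/tM_\cc$ lies in the torsion-free class of $\mathcal T$, $\Hom(T_l, M_\cc/tM_\cc) = 0$, so $\dim\Hom(T_l, tM_\cc) = \dim\Hom(T_l, M_\cc)$; combined with $\Ext^1(T_l, tM_\cc) = 0$ (from hereditariness of $\rep Q^\op$ together with $\Ext^1(T^+,T^+) = 0$, via a short exact sequence expressing $tM_\cc$ as a quotient of a direct sum of copies of $T^+$) and the Euler-form identity $\langle \udim N,\, g(X)\rangle = \dim\Hom(X,N) - \dim\Ext^1(X,N)$ (obtained by expanding $[X]$ in the projective basis), this gives the desired equation. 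For $T_l = P_k[1] \in T^-$: I will argue that $k$ lies outside the support of $T^+$, so every object in $\mathcal T = \operatorname{Fac}(T^+)$ vanishes at vertex $k$, forcing $\udim(tM_\cc)_k = 0$.

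Once both sides satisfy the same linear equations, the conclusion is immediate: $\langle t - \udim(tM_\cc),\, g(T_l)\rangle = 0$ for every $l$, and since $\{g(T_1),\ldots,g(T_n)\}$ is a basis of $\mathbb R^n$ (the $g$-vectors of any cluster, by Theorem~\ref{th-one}), we obtain $t = \udim(tM_\cc)$. The main obstacle is expected to be the tilting-theoretic bookkeeping in the third step: verifying that $T^+$ behaves enough like a classical tilting module that $\Ext^1(T^+,-)$ vanishes on $\operatorname{Fac}(T^+)$, and that the shifted-projective summands of $T^-$ correspond exactly to the vertices outside the support of $T^+$. Both are standard consequences of cluster-tilting theory in the hereditary case, but must be invoked carefully.
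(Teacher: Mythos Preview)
Your proof is correct and uses the same core representation-theoretic facts as the paper---namely that $\Hom(\mathcal T,\mathcal F)=0$ and that summands of $T^+$ are $\Ext$-projective in $\mathcal T$---but organizes the computation differently. The paper computes $\pi(\qq_T)_i$ one coordinate at a time: it writes $\qq_T=\vv_\cc-[\dim\Hom(T_l,M_\cc)](G|_T)^{-1}G$, takes a coresolution $0\to P_i\to T_{i0}\to T_{i1}\to 0$ with $T_{i0},T_{i1}\in\operatorname{add}T$, and then runs a diagram chase on the long exact sequences for $\Hom(-,M_\cc)$ and $\Hom(-,tM_\cc)$ to identify the image of $\Hom(T_{i0},M_\cc)\to\Hom(P_i,M_\cc)$ with $\Hom(P_i,tM_\cc)$. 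You instead derive the single coordinate identity $\xx_\alpha=\dim\Hom(W_\alpha,M_\cc)-\langle\pi(\xx),g(\alpha)\rangle$, read off $n$ linear equations from the summands of $T$, and verify that $\udim(tM_\cc)$ satisfies them via the Euler-form identity $\langle\udim N,g(X)\rangle=\dim\Hom(X,N)-\dim\Ext^1(X,N)$. Your route avoids the explicit diagram chase and handles the shifted-projective summands $P_k[1]$ more transparently (the paper absorbs this case into the assertion that ``$T_{i1}$ is $\Ext$-projective in $\mathcal T$'', which strictly speaking requires unpacking when $T_{i1}$ has shifted summands). One minor remark: the fact that the $g$-vectors of a cluster form a basis of $\mathbb R^n$ is not stated in Theorem~\ref{th-one} per se; the paper establishes it inside the proof of this very lemma by interpreting $G|_T$ as the change-of-basis matrix from $\{\udim T_l\}$ to $\{\udim P_k\}$, so you might cite that instead.
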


\begin{example}
To see examples of this lemma, we can revisit Example \ref{ex-a3-c}.
The first vertex listed corresponds to $T_1=kQ^\op$.  The torsion class $\mathcal T_1=\rep Q^\op$ is the full category of representations, so $t_1M_\cc=M_\cc$, and $\pi(q_{T_1})=(3,4,3)$, which is the dimension of $M_\cc$.

Looking at the second vertex listed, we see that $\mathcal T_2$ consists of direct sums of all indecomposables except $S_3$.  The corresponding torsion part $t_2M_\cc$ therefore has dimension $(3,4,2)=\pi(q_{T_2})$.  (Recall that in the definition of $\pi$, the order in which the final slice of coordinates appear is determined by their corresponding $g$-vectors, which is why, in type $A_3$, $\pi$ is in effect reading the final slice of coordinates from bottom to top.)  
\end{example}

\begin{proof}

As before, let $G$ be the $n\times |\mathcal I|$ matrix whose $(i,j)$-th 
column consists of the $g$-vector $g(i,j)$.  

Let the summands of $T$ be $T_1,\dots,T_n$.  
Let us write $G|_T$
for the $n\times n$ matrix formed by taking the columns
of $G$ corresponding to $T_1,\dots,T_n$.  
We can interpret $G|_T$ as the change-of-basis matrix from the basis 
$\udim T_1,\dots,\udim T_n$ to the basis $\udim P_1,\dots,\udim P_n$.
$G|_T$ is therefore invertible, with inverse given by the inverse change of basis.  It follows that
$(G|_T)^{-1}G$ is a matrix whose restriction to the columns 
corresponding to summands of $T$ is an identity matrix.  

The point in $\mathbb E_\cc$ which has zeros in the columns corresponding to the $T_i$ is
therefore $$\vv_\cc-[\dim\Hom(T_1,M_\cc),\dots,\dim\Hom(T_n,M_\cc)](G|_T)^{-1}G$$

Note that since
$T$ is a tilting object, each indecomposable projective module $P_i$ 
admits a coresolution 
$$P_i \rightarrow T_{i0} \rightarrow T_{i1}\rightarrow P_i[1],$$
where $T_{i0}$ and $T_{i1}$ are in add $T$.  The entries in the $i$-th
column of $(G|_T)^{-1}$ encode the signed multiplicity of 
$T_1,\dots,T_n$ in this coresolution of $P_i$.  

Note that the final $n$ coordinates of $\vv_\cc$ are zero.  The $i$-th coordinate of $\mathbb A_\cc$ is therefore $\dim\Hom(T_{i0},M_\cc)-\dim\Hom(T_{i1},M_\cc)$.  

From the coresolution of $P_i$, we obtain the following commutative diagram, with the rows exact:

$$ \begin{tikzpicture}[xscale=1.4,yscale=1.2]
\node (y) at (-3.5,0) {0};
\node (x) at (-2,0) {$\Hom(T_{i1},M_\cc)$};
\node (a) at (0,0) {$\Hom(T_{i0},M_\cc)$};
\node (b) at (2,0) {$\Hom(P_i,M_\cc)$};
\node (c) at (4,0) {$\Ext^1(T_{i1},M_\cc)$};
\node (d) at (0,-1) {$\Hom(T_{i0},tM_\cc)$};
\node (e) at (2,-1) {$\Hom(P_i,tM_\cc)$};
\node (f) at (4,-1) {$\Ext^1(T_{i1},tM_\cc)$};
\node (z) at (-3.5,-1) {0};
\node (w) at (-2,-1) {$\Hom(T_{i1},tM_\cc)$};
\draw[-stealth](a)--(b);
\draw[-stealth](b)--(c);
\draw[-stealth](e)--(f);
\draw[-stealth](d)--(e);
\draw[-stealth](d)--(a);
\draw[-stealth](e)--(b);
\draw[-stealth](f)--(c);
\draw[-stealth](y)--(x);
\draw[-stealth](x)--(a);
\draw[-stealth](z)--(w);
\draw[-stealth](w)--(d);
\draw[-stealth](w)--(x);
\end{tikzpicture}$$

The zeros on the lefthand end follow from the fact that
  $\Hom(P_i,M_\cc[-1])=0=\Hom(P_i,tM_\cc[-1])$. Further,
$\Ext^1(T_{i1},tM_\cc)=0$ because $T_{i1}$ is 
$\Ext$-projective in $\mathcal T$ while $tM_\cc$ is in $\mathcal T$.  
Therefore the map from $\Hom(T_{i0},tM_\cc)$ to $\Hom(P_i,tM_\cc)$ is 
surjective.  

The first three vertical maps are injective because they are induced from the inclusion of $tM_\cc$ into $M_\cc$.  

Any map from a torsion module to $M_\cc$ necessarily lands in the torsion part
of $M_\cc$, so factors through $tM_\cc$.  This means that the first two
vertical arrows are also surjective.  

Our goal is to understand the image of $\Hom(T_{i0}, M_\cc)$ 
inside $\Hom(P_i,M_\cc)$; by what we have already shown, it equals $\Hom(P_i,tM_\cc)$; in other words, the dimension of this image is the dimension of $tM_\cc$ at vertex $i$, as desired. 
\end{proof}

From the previous lemma, the following proposition is almost immediate:

\begin{proposition}\label{p1} $\mathbb A_\cc=\Po_{M_\cc}$.
\end{proposition}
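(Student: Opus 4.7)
The plan is to show that $\mathbb{A}_\cc$ and $\Po_{M_\cc}$ have the same vertex set; since both are convex polytopes, equality of the polytopes then follows. The preceding lemma already identifies the vertices of $\mathbb{A}_\cc$ as the dimension vectors $\udim(tM_\cc)$, where $t$ denotes the torsion part functor for the torsion class in $\rep Q^\op$ arising from a cluster tilting object $T$ in the additive hull of $\{W_\alpha \mid \alpha \in \mathcal{I}\}$; and Lemma \ref{BKTlemma} identifies the vertices of $\Po_{M_\cc}$ as dimension vectors of torsion parts $tM_\cc$ as $\mathcal{T}$ ranges over \emph{all} torsion classes in $\rep Q^\op$.

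For the inclusion $\mathbb{A}_\cc \subseteq \Po_{M_\cc}$, the preceding lemma expresses each vertex of $\mathbb{A}_\cc$ as the dimension vector of a submodule of $M_\cc$, so each such vertex lies in $\Po_{M_\cc}$. Convexity then promotes this vertex-wise containment to an inclusion of polytopes. This direction is essentially immediate.

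For the reverse inclusion $\Po_{M_\cc} \subseteq \mathbb{A}_\cc$, by Lemma \ref{BKTlemma} it suffices to show that every torsion class $\mathcal{T}$ in $\rep Q^\op$ arises from some cluster tilting object $T$ as in the preceding lemma. I would invoke the classical bijection (valid in Dynkin type) between torsion classes in $\rep Q^\op$ and basic support tilting modules: take the additive closure of the $\Ext$-projective objects of $\mathcal{T}$ to get a support tilting module, then augment by the shifted projectives $P_j[1]$ at vertices $j$ not supported in $\mathcal{T}$. This produces the required cluster tilting object $T$, and the preceding lemma gives a vertex $\pi(q_T) = \udim(tM_\cc)$ of $\mathbb{A}_\cc$ agreeing with the given vertex of $\Po_{M_\cc}$.

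The main technical point to check is that the torsion class attached to $T$ by the recipe of the preceding lemma (quotients of sums of summands of $T$ that lie in $\rep Q^\op$) coincides with the original $\mathcal{T}$; this is a standard consequence of the fact that, in Dynkin type, a torsion class is generated under quotients by its $\Ext$-projective objects.
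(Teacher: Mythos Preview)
Your approach is essentially the same as the paper's: both arguments combine the preceding lemma with Lemma~\ref{BKTlemma} and the classical bijection in Dynkin type between torsion classes and (cluster) tilting objects. One small point: the preceding lemma computes $\pi(\qq_T)$ for a given tilting object $T$, but does not by itself say that \emph{every} vertex of $\mathbb U_\cc$ is of the form $\qq_T$; since $\cc$ may have zero entries here, you should invoke Corollary~\ref{cor1} (as the paper does) to get that each vertex lies on a union of maximal compatible sets and hence equals $\qq_T$ for some $T$.
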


\begin{proof} 
Thanks to Corollary \ref{cor1}, we know that the vertices of $\mathbb U_\cc$ are the set of points $\qq_T$ for $T$ a tilting object in the additive hull of the $W_\alpha$, with $\alpha\in \mathcal I$.  (Note that it is of course posible that $\qq_T=\qq_{T'}$ for two distinct tilting objects $T$ and $T'$.) By the previous lemma, $\pi(\qq_T)$ is the dimension vector of the torsion part of $M_\cc$ with respect to the corresponding torsion class.  All torsion classes are of this form, so $\mathbb A_\cc$ is the convex hull of the dimension vectors of all possible torsion parts of $M_\cc$.  
Lemma \ref{BKTlemma} now tells us that $\mathbb A_\cc$ is therefore 
the submodule polytope of $M_\cc$, as desired.    
\end{proof}

We can now prove Theorem \ref{th-two}.

\begin{proof}[Proof of Theorem \ref{th-two}]
  Since we are interested in $\mathbb A_{\ee_{ij}}$, we set $\cc=\ee_{ij}$, $M_\cc=W_{ij}$.
  The quiver Grassmannian $\Gr_\ee(W_{ij})$ is empty if $\ee$ is not the dimension
  vector of a submodule.  Thus, the Newton polytope of $F_{(i+1,j)}$ is
  contained in the convex hull of the dimension vectors of submodules of
  $W_{ij}$, which we have established in Proposition \ref{p1} is
  $\mathbb A_{\ee_{ij}}$.  It remains to check that the vertices of $\mathbb A_{\ee_{ij}}$
  correspond to quiver Grassmannians with non-zero Euler characteristics.  Lemma \ref{BKTlemma} tells us that for each
  vertex of $\mathbb A_{\ee_{ij}}$, there is a unique submodule of 
  the appropriate dimension vector.  The quiver Grassmannian is
  therefore a single point, and the Euler characteristic of a single point is 1.
  \end{proof}  
  
\section{The use of principal coefficients}\label{prin}

Let $Q^\ice$ be an ice quiver, whose unfrozen part is $Q$.  
We will explain, following \cite{CA4}, how the cluster variables of $\mathcal A(Q^\ice)$ can be calculated directly from those of $\mathcal A^\prin(Q)$, rather than via mutation.  




Let $f(x_1,\dots,x_n,y_1,\dots,y_n)$ be a cluster variable in 
$A^\prin(Q)$.  By $F(y_1,\dots,y_n)$ we denote the associated $F$-polynomial, which is obtained by setting 
$x_1=\dots=x_n=1$ in $f$.

Let $z_1,\dots,z_m$ be the coefficients corresponding to the frozen vertices of $Q^\ice$ (equivalently, these correspond to rows 
$n+1$ to $n+m$ of the matrix $\widetilde B_0$.)  
Define
$$\tilde y_i=\prod_{j=1}^m z_j^{b_{n+j,i}}$$

We write $F^\trop(\tilde y_1,\dots,\tilde y_n)$ for the tropical evaluation of $F$ at $\tilde y_1,\dots,\tilde y_n$.  This is the monomial such that the power of $z_i$ that appears in it is the minimum over all terms of $F$ of the power of $z_i$ in that term.  (This is the gcd of the monomials that appear.)

Then \cite[Theorem 3.7]{CA4} says that the cluster variable in $\mathcal A(Q)$ corresponding to $f$
is equal to 
$$f(x_1,\dots,x_n,\tilde y_1,\dots, \tilde y_n)/F^\trop(\tilde y_1,\dots,\tilde y_n).$$

\begin{example} Let us consider the following example, 
\begin{center}
$Q^\ice: \xymatrix{1 \ar[r] & 2  & 3 \ar[l] \ar[dl]\\
& \fbox{4} \ar[ul] & }$
\end{center} 
with vertex \boxed{4} frozen, and associated to the variable $z$.  
The corresponding $B$-matrix is:
$$\left[\begin{array}{ccc} 0&1&0\\ -1&0&-1\\ 0&1&0 \\ \hline 1&0&-1\end{array}\right]$$

Using the labeling of the vertices as in Example~\ref{ex-a3}, the cluster variables as well as the cluster variables with coefficients associated to every vertex are as following:

$$\begin{array}{SlSlSl}
      & \text{{Cluster variables for $Q^\ice$}} & \text{{Cluster variables with principal coefficients}} \\
$(0,1)$ & $x_1$  & $x_1$\\
$(0,2)$ & $x_2$  & $x_2$\\
$(0,3)$ & $x_3$  & $x_3$\\
$(1,1)$ &$\dfrac{x_2+z}{x_1}$ & $\dfrac{x_2 + y_1}{x_1}$\\
$(1,2)$ & $\dfrac{x_1x_3z+x_2^2z+x_2z^2+x_2+z}{x_1x_2x_3}$ & $\dfrac{x_1x_3y_1y_2y_3+x_2^2+x_2y_1+x_2y_3+y_1y_3}{x_1x_2x_3}$\\
$(1,3)$ &$\dfrac{x_2z+1}{x_3}$ & $\dfrac{x_2 + y_3}{x_3}$\\
$(2,1)$ & $\dfrac{x_1x_3z+x_2+z}{x_1x_2}$ & $\dfrac{x_1x_3y_1y_2+x_2+y_1}{x_1x_2}$ \\
$(2,2)$ & $\dfrac{x_1x_3+x_2z+1}{x_2x_3}$ & $\dfrac{x_1x_3y_2y_3+x_2+y_3}{x_2x_3}$ \\
$(2,3)$ & $\dfrac{x_1x_3+1}{x_2}$ & $\dfrac{x_1x_3y_2+1}{x_2}$\\
\end{array}$$



By the definition of $\tilde y_i$, in this example we have $\tilde y_1=z$, while $\tilde y_2=1$ and $\tilde y_3=z^{-1}$.
Substituting them in the polynomials listed above, we obtain the polynomials $f(x_1,\dots,x_n,\tilde y_1,\dots, \tilde y_n)$. Moreover, the monomials $F^\trop(\tilde y_1,\dots,\tilde y_n)$ associated to every polynomial $f$ are as follows:

$$\begin{array}{SlSlSl}
      & ${f(x_1,\dots,x_n,\tilde y_1,\dots, \tilde y_n)}$ & ${F^\trop(\tilde y_1,\dots,\tilde y_n)}$ \\
$(0,1)$ & $x_1$  & 1\\
$(0,2)$ & $x_2$  & 1\\
$(0,3)$ & $x_3$  & 1\\
$(1,1)$ &$\dfrac{x_2+z}{x_1}$ & 1\\
$(1,2)$ & $\dfrac{x_1x_3+x_2^2+x_2z+x_2z^{-1}+1}{x_1x_2x_3}$ & $z^{-1}$\\
$(1,3)$ &$\dfrac{x_2+z^{-1}}{x_3}$ & $z^{-1}$\\
$(2,1)$ & $\dfrac{x_1x_3z+x_2+z}{x_1x_2}$ & 1 \\
$(2,2)$ & $\dfrac{x_1x_3z^{-1}+x_2+z^{-1}}{x_2x_3}$ & $z^{-1}$ \\
$(2,3)$ & $\dfrac{x_1x_3+1}{x_2}$ & 1\\
\end{array}$$


Calculating $f(x_1,\dots,x_n,\tilde y_1,\dots, \tilde y_n)/F^\trop(\tilde y_1,\dots,\tilde y_n)$ and comparing with the cluster variables above, we observe that the theorem holds in this case.  \end{example}

\section{Universal coefficients}

As mentioned in Section \ref{cluster-intro}, there are two choices of coefficients which are particularly 
interesting.  One is the principal coefficients which we discussed in the previous section. The other is \emph{universal coefficients}.

In fact, there are two closely related notions: universal coefficients, introduced by Fomin and Zelevinsky \cite{CA4}, and universal geometric coefficients, introduced by Reading \cite{Reading}.  In both cases,
the goal is a cluster algebra with sufficiently general coefficients that it will admit a ring homomorphism (with certain good properties) to the cluster algebra defined for any other choice of coefficients.  In the finite type case, the two definitions yield the same system of 
coefficients, see \cite{Reading}.  We will not need
any properties of universal coefficients, so we do not
give the precise definitions.  

Reading ~\cite[Theorem 10.12]{Reading} proves $\widetilde B_0$ provides universal coefficients if the coefficient rows of the extended exchange matrix are the $g$-vectors $B_0^T$, where $T$ indicates transposition. 

Since, for us, $B_0$ is skew-symmetric, $B_0^T=-B_0$.
Thus, the desired coefficient rows are the $g$-vectors for $Q^\op$.  Reading the quiver for $\mathcal I$ from right to left instead of left to right, we see that the $g$-vectors for $Q^\op$ are simply the negatives of the $g$-vectors for $Q$.  Thus, we shall be interested in the setting where we add a row to the exchange matrix $B_0$ for each element of $\mathcal I$, with the row corresponding to $(i,j)$ being given by $-g(i,j)$.  
We will call the corresponding algebra $\mathcal A^\univ(Q)$.

\pagebreak
\begin{example} Consider again Example~\ref{ex-a3}. The extended exchange matrix $\widetilde{B}_0$ is

$$\left[ \begin{array}{ccc}
 0 & 1 & 0\\
 -1 & 0 & -1\\
0 & 1 & 0\\ \hline
-1 & 0 & 0\\
 0 &-1 & 0\\
 0 & 0 &-1\\
 1 & -1 &0\\
 1 & -1 &1\\
 0 & -1 &1\\
  0 & 0 & 1\\
 0 & 1 & 0\\
 1 & 0 & 0
\end{array}
\right]$$

The additional rows in $\widetilde{B}_0$ define the behaviour of 
$z_{0,1},\dots,z_{2,3}$, where $z_{ij}$ is the frozen variable corresponding to the row whose entries are $-g(i,j)$.    

The cluster variables with universal coefficients are computed below. 
$$\begin{array}{SlSl}
      & \text{Cluster variables with universal coefficients} \\
$(0,1)$ & $x_1$  \\
$(0,2)$ & $x_2$  \\
$(0,3)$ & $x_3$  \\
$(1,1)$ &$\dfrac{x_2z_{0,1} + z_{1,1}z_{1,2}z_{2,3}}{x_1}$ \\
$(1,2)$ & $(x_2^2z_{0,1}z_{0,2}z_{0,3} + x_2z_{0,1}z_{0,2}z_{1,2}z_{1,3}z_{2,1} + x_2z_{0,2}z_{0,3}z_{1,1}z_{1,2}z_{2,3}$ \\ &$+ z_{0,2}z_{1,1}z_{1,2}^2z_{1,3}z_{2,1}z_{2,3} + x_1x_3z_{1,2}z_{2,1}z_{2,2}z_{2,3})/x_1x_2x_3$ \\
$(1,3)$ &$\dfrac{x_2z_{0,3} + z_{1,2}z_{1,3}z_{2,1}}{x_3}$ \\
$(2,1)$ &$\dfrac{x_2z_{0,2}z_{0,3}z_{1,1} + z_{0,2}z_{1,1}z_{1,2}z_{1,3}z_{2,1} + x_1x_3z_{2,1}z_{2,2}}{x_2x_3}$ \\
$(2,2)$ &$\dfrac{z_{0,2}z_{1,1}z_{1,2}z_{1,3} + x_1x_3z_{2,2}}{x_2}$ \\
$(2,3)$ & $\dfrac{x_2z_{0,1}z_{0,2}z_{1,3} + z_{0,2}z_{1,1}z_{1,2}z_{1,3}z_{2,3} + x_1x_3z_{2,2}z_{2,3}}{x_1x_2}$ \\
\end{array}$$
\end{example}


Define the universal $F$-polynomial $F^\univ_{(i,j)}$
to be the polynomial
obtained starting from the cluster variable in position $(i,j)$ in $\mathcal A^\univ(Q)$, and setting the initial
cluster variables to one. 

\begin{theorem} $\mathbb U_{\ee_{(i,j)}}$ is
the Newton polytope of $F^\univ_{(i+1,j)}$.
\end{theorem}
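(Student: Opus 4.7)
The plan is to deduce this from Theorem~\ref{th-two} via the substitution formula of Section~\ref{prin}. Since universal coefficients correspond to the ice quiver obtained by adjoining to $B_0$ one frozen row $-g(\alpha)$ for each $\alpha\in\mathcal I$, the formula of Section~\ref{prin} expresses the universal cluster variable as the specialization of the principal one at $y_i\mapsto \tilde y_i:=\prod_{\alpha\in\mathcal I}z_\alpha^{-g(\alpha)_i}$, divided by the tropical evaluation. Setting the initial cluster variables to $1$ then yields
$$F^{\univ}_{(i+1,j)}(z)=\frac{F_{(i+1,j)}(\tilde y_1,\dots,\tilde y_n)}{F^{\trop}_{(i+1,j)}(\tilde y_1,\dots,\tilde y_n)}.$$

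First I would track how this substitution acts on Newton polytopes. A monomial $y^{\ee}$ is sent to $\prod_{\alpha}z_\alpha^{-\langle g(\alpha),\,\ee\rangle}$, whose exponent vector in $\mathbb R^{\mathcal I}$ is $-G^T\ee$, where $G$ is the $n\times|\mathcal I|$ matrix of $g$-vectors. Because $G$ has rank $n$, the linear map $\ee\mapsto -G^T\ee$ is injective, so distinct terms of $F_{(i+1,j)}$ do not collide, and Theorem~\ref{th-two} implies that the Newton polytope of $F_{(i+1,j)}(\tilde y)$ is exactly $-G^T\mathbb A_{\ee_{ij}}$. Dividing by the tropical part is a monomial division, hence a translation, so the Newton polytope of $F^{\univ}_{(i+1,j)}$ is the unique translate of $-G^T\mathbb A_{\ee_{ij}}$ that lies in the nonnegative orthant and whose minimum in each coordinate $z_\alpha$ is $0$.

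Next I would recognize $\mathbb U_{\ee_{ij}}$ as the same translate. The affine section of $\pi$ used in the proof of Theorem~\ref{th-one} is $\overline\xx\mapsto \vv_\cc-G^T\overline\xx$, so $\mathbb U_\cc=\sigma(\mathbb A_\cc)$ is the translate of $-G^T\mathbb A_\cc$ by $\vv_\cc$. Applied with $\cc=\ee_{ij}$, this exhibits $\mathbb U_{\ee_{ij}}$ as a translate of $-G^T\mathbb A_{\ee_{ij}}$ contained in the nonnegative orthant. By uniqueness of such a translate, the theorem will then follow once one checks that $\mathbb U_{\ee_{ij}}$ also has coordinate-wise minimum $0$ in every coordinate $p_\alpha$.

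The hard part is precisely this coordinate-wise minimum property. Since $\cc=\ee_{ij}$ is only non-negative, Corollary~\ref{cor1} merely tells us that the facet normals of $\mathbb A_{\ee_{ij}}$ form a subset of the $g$-vectors, so some candidate coordinate hyperplanes may have ``collapsed away''. To handle it I would argue by continuity. For $\varepsilon>0$ set $\cc(\varepsilon):=\ee_{ij}+\varepsilon\cdot\mathbf 1$, a strictly positive tuple. By Theorem~\ref{th-one}(2) applied to each singleton $\{\alpha\}\subset\mathcal I$ (which is trivially compatible), the polytope $\mathbb U_{\cc(\varepsilon)}$ has a point with $p_\alpha=0$ for every $\alpha\in\mathcal I$. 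Because $\vv_\cc$ depends linearly on $\cc$ and the family $\mathbb U_{\cc(\varepsilon)}$ is uniformly bounded by Lemma~\ref{bounded}, we have Hausdorff convergence $\mathbb U_{\cc(\varepsilon)}\to\mathbb U_{\ee_{ij}}$ as $\varepsilon\to 0^+$; extracting a limit of the points on $\{p_\alpha=0\}$ for each $\alpha$ produces a point of $\mathbb U_{\ee_{ij}}$ on that hyperplane. Hence the coordinate-wise minimum of $\mathbb U_{\ee_{ij}}$ is $0$ in every coordinate, and the two translates must coincide.
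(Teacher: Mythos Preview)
Your proof is correct and follows essentially the same strategy as the paper: express $F^{\univ}_{(i+1,j)}$ via the substitution formula from Section~\ref{prin}, identify both its Newton polytope and $\mathbb U_{\ee_{ij}}$ as translates of $-G^T\mathbb A_{\ee_{ij}}$ via Theorem~\ref{th-two} and the section $\sigma$, and then conclude they coincide because each has coordinate-wise minimum zero in every $p_\alpha$. You are in fact more careful than the paper on this last point: the paper simply asserts that every coordinate attains the value $0$ on $\mathbb U_{\ee_{ij}}$, whereas you supply the continuity argument from strictly positive $\cc$ (which is indeed needed, since Theorem~\ref{th-one} only applies for strictly positive parameters and Corollary~\ref{cor1} does not give this conclusion directly).
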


\begin{example}
Consider, for example, the universal $F$-polynomial $F^\univ_{(1,1)}$.  According to the above calculation, it is $z_{0,1}+z_{1,1}z_{1,2}z_{2,3}$.  Thus, its Newton polytope is the line segment from $(1,0,0,0,0,0,0,0,0)$ to $(0,0,0,1,1,0,0,0,1)$.  To find the vertices of $\mathbb A_{\ee_{(0,1)}}$, we find solutions to the $\ee_{(0,1)}$-deformed mesh relations which are all non-negative and have at least three zeros.  The results are the following, as expected.  

\[\begin{tikzpicture}[xscale=.9, yscale=.75]
    \draw (-3,2.4) node (01) {$1$};
    \draw (-3,0) node (02)   {$0$};
    \draw (-2,1.2) node (03) {$0$};
        \draw (-2,2.4) node[red] {$1$};
        \draw (-2,0) node[red] {$0$};
        \draw (-1,1.2) node[red] {$0$};
    \draw (-1,2.4) node (11) {$0$};
    \draw (-1,0) node (12)   {$0$};
    \draw (0,1.2) node (13)  {$0$};
        \draw (0,2.4) node[red] {$0$};
        \draw (0,0) node[red] {$0$};
        \draw (1,1.2) node[red] {$0$};
    \draw (1,2.4) node (21)  {$0$};
    \draw (1,0) node  (22)   {$0$};
    \draw (2,1.2) node (23)  {$0$};
    \draw [->] (01) -- (03);
    \draw [->] (02) -- (03);
    \draw [->] (03) -- (11);
    \draw [->] (03) -- (12);
    \draw [->] (11) -- (13);
    \draw [->] (12) -- (13);
    \draw [->] (13) -- (21);
    \draw [->] (13) -- (22);
    \draw [->] (21) -- (23);
    \draw [->] (22) -- (23);
\end{tikzpicture} \qquad
\begin{tikzpicture}[xscale=.9, yscale=.75]
    \draw (-3,2.4) node (01) {$0$};
    \draw (-3,0) node (02)   {$0$};
    \draw (-2,1.2) node (03) {$0$};
        \draw (-2,2.4) node[red] {$1$};
        \draw (-2,0) node[red] {$0$};
        \draw (-1,1.2) node[red] {$0$};
    \draw (-1,2.4) node (11) {$1$};
    \draw (-1,0) node (12)   {$0$};
    \draw (0,1.2) node (13)  {$1$};
        \draw (0,2.4) node[red] {$0$};
        \draw (0,0) node[red] {$0$};
        \draw (1,1.2) node[red] {$0$};
    \draw (1,2.4) node (21)  {$0$};
    \draw (1,0) node  (22)   {$1$};
    \draw (2,1.2) node (23)  {$0$};
    \draw [->] (01) -- (03);
    \draw [->] (02) -- (03);
    \draw [->] (03) -- (11);
    \draw [->] (03) -- (12);
    \draw [->] (11) -- (13);
    \draw [->] (12) -- (13);
    \draw [->] (13) -- (21);
    \draw [->] (13) -- (22);
    \draw [->] (21) -- (23);
    \draw [->] (22) -- (23);
\end{tikzpicture} \]








\end{example}

\begin{proof}  We will use the strategy explained in Section \ref{prin} to calculate $F^\univ_{(i+1,j)}$ on the basis of $F_{(i+1,j)}$.  Define $\tilde y_1,\dots, \tilde y_n$ as in Section \ref{prin} with respect to the matrix $\widetilde B_0$ as defined above.  Then $$F^\univ_{(i+1,j)}=\frac{F_{(i+1,j)}(\tilde y_1,\dots,\tilde y_n)}{F^\trop_{(i+1,j)}(\tilde y_1,\dots,\tilde y_n)}.$$

Let $\xx$ be a vertex of $\mathbb A_\cc$.  It corresponds to a term $y^\xx$ in $F_{(i+1,j)}$.  
There
is a unique element $\tilde \xx \in \mathbb E_\uzero$ such that
$\pi(\tilde \xx)=\xx$.  This is precisely the exponent vector of the result of substituting 
$\tilde y_i$ for $y_i$ in the monomial $y^\xx$.  

The element of $\mathbb E_\cc$ which projects onto $\xx$ is exactly
$\tilde\xx + \vv_\cc$. This, then, is the vertex of $\mathbb U_\cc$ 
corresponding to $\xx$.
What remains to be verified is that 
$\vv_\cc$ equals negative the exponent of
$F^\trop_{(i+1,j)}(\tilde y_1,\dots,\tilde y_n)$.  

By definition, $F^\trop_{(i+1,j)}(\tilde y_1,\dots,\tilde y_n)$ is the least common multiple of all the terms in $F_{(i+1,j)}(\tilde y_1,\dots,\tilde y_n)$.
Subtracting it from the Newton polytope of $F_{(i+1,j)}(\tilde y_1,\dots,\tilde y_n)$ translates the polytope so that for each $z_{ij}$, the 
minimal power that appears is zero.  We know that $\mathbb U_\cc$ 
also has the property that the minimum value which any coordinate
takes on within $\mathbb U_\cc$ is zero.  Thus $\vv_\cc=-F^\trop_{(i+1,j)}(\tilde y_1,\dots,\tilde y_n)$, as desired.   
\end{proof}

\begin{example} We look at $F_{(1,1)}^\univ$ in our running example.
\begin{eqnarray*}
\tilde y_1&=&z_{1,1}z_{1,2}z_{2,3}/z_{0,1}\\
\tilde y_2&=&z_{2,2}/z_{0,2}z_{1,1}z_{1,2}z_{1,3}\\
\tilde y_3&=&z_{1,2}z_{1,3}z_{2,1}/z_{0,3}
\end{eqnarray*}
Now $$F_{(1,1)}(\tilde y_1,\tilde y_2,\tilde y_3)=\frac{z_{0,1} + z_{1,1}z_{1,2}z_{2,3}}{z_{0,1}}, \quad F^\trop_{(1,1)}(\tilde y_1,\tilde y_2,\tilde y_3)=z^{-1}_{0,1}$$
and we indeed obtain $$F^\univ_{(1,1)}=F_{(1,1)}(\tilde y_1,\tilde y_2, \tilde y_3)/F^\trop_{(1,1)}(\tilde y_1,\tilde y_2,\tilde y_3).$$
\end{example}

\section{The nef cone of the toric variety associated to the $g$-vector fan is simplicial}

\subsection{Brief reminder on toric varieties}
Our main reference for toric varieties is \cite{CLS}. We do not give
specific references for the basic facts which are to be found in the first few
chapters of that book.

Let $N$ be a free abelian group of rank $n$. Write $M=\Hom(N,\mathbb Z)$ for
its dual. We write $\langle \cdot,\cdot\rangle$ for the duality pairing from
$M\times N$ to $\mathbb Z$.
We write $N_{\mathbb R}$ for $N \otimes_{\mathbb Z}\mathbb R$, and
in general use a subscript $\mathbb R$ to denote tensoring by
$\mathbb R$.

A cone in a real vector space is a semigroup closed under multiplication by
non-negative reals.
A strongly convex, rational, polyhedral cone in $N_{\mathbb R}$ is a cone
which is
generated by a
finite collection of vectors from $N$, all lying in a proper half-space.
A fan in $N_{\mathbb R}$ is a collection of strongly convex,
rational, polyhedral cones
such that the
intersection of any two cones is necessarily a face of each.
(The examples to
which we will apply this theory are the outer normal fan to the generalized associahedra $\mathbb A_\c$ which we have
constructed, which are particular instances of $g$-vector fans of finite type
cluster algebras. They are indeed fans in the above sense.)

Let $\Sigma$ be a fan in $N_{\mathbb R}$.
For simplicity of exposition, we will assume that the $n$-dimensional
cones of
$\Sigma$ cover $\mathbb R^n$, since this is true in particular
for
the outer normal fans which we are considering.
We write $\Sigma^i$ for the $i$-dimensional cones of $\Sigma$.

Associated to a fan $\Sigma$, there is a normal toric variety $X_\Sigma$. 
The field of rational functions on $X_\Sigma$ is the fraction field of
the group ring $\mathbb C[M]$. For $\m\in M$, we write $\chi^\m$ for the
corresponding function field element.

There is a torus $T\simeq(\mathbb C^*)^n$ acting on $X_\Sigma$. There is a bijection
between cones of $\Sigma$ and $T$-orbits in $X_\Sigma$. We write $\mathcal O_\sigma$ for the orbit corresponding to the cone $\sigma \in \Sigma$.
The dimension of $\mathcal O_\sigma$ is
$n$ minus the dimension of the span of $\sigma$.

A divisor on a normal variety is a formal $\mathbb Z$-linear
combination of irreducible codimension one subvarieties.
On a toric variety, we are
particularly interested in those divisors which are torus invariant.
For $\rho\in\Sigma^1$, we define $D_\rho$ to be the closure of $\mathcal O_\rho$.
This is an irreducible codimension one subvariety which is torus-invariant.
The torus-invariant divisors of
$X_\Sigma$ are $\Div(X_\Sigma)=\bigoplus_{\rho\in \Sigma^1} \mathbb Z [D_\rho]$.

Given a normal variety $X$ and an element of the function field of $X$, say
$f$, there is an associated divisor, $\div(f)$. Informally, it consists of the
zero locus of $f$ minus the locus where $f$ blows up. We shall shortly define
this notion precisely in the setting of toric varieties.
A divisor $D$ on a normal variety $X$
is called a Cartier divisor if there exists an
open cover $\{U_i\}$ of $X$ such that $D|_{U_i}$ is principal for each $i\in I$,
that is to say, there exists an element $f_i$ of the function field of $U_i$ such
that $\div(f_i)$ equals the restriction of $D$ to
$U_i$.

In the case of toric varieties, we are interested in torus-invariant
Cartier divisors. It turns out that there is a canonical choice of
open cover which works for any torus-invariant Cartier divisor.
For each maximal cone $\sigma$, let $X_\sigma$ denote the
toric variety associated to the fan consisting of $\sigma$ and its faces.
$X_\sigma$ is open in $X_\Sigma$, and $X_\Sigma$ is covered by the varieties
$X_\sigma$. Any torus-invariant Cartier divisor on $X_\Sigma$ is given by
a collection of functions, one on each $X_\sigma$. In fact, we can take the function on $X_\sigma$ to be of the form $\chi^{\m_\sigma}$ for some
$\m_\sigma\in M$. We call the collection $\{\m_\sigma\}_{\sigma\in\Sigma^n}$ the local data of the
Cartier divisor.
For each ray $\rho\in\Sigma^1$, let $\u_\rho$ be the first lattice point along the ray $\rho$.
The multiplicity of $D_\rho$ in the divisor corresponding to the local data
$ \{\m_\sigma\}_{\sigma\in\Sigma^n}$ is given
by $-\langle \m_\sigma,\u_\rho\rangle$ for $\sigma$ any cone of
$\Sigma$ containing $\rho$.

A collection $\{\m_\sigma\}_{\sigma\in\Sigma^n}$
is not necessarily the local data of any Cartier
divisor. The following condition provides a necessary and sufficient condition
to verify that it is.

\begin{lemma}[{\cite[Theorem 4.2.8, Exercise 4.2.3]{CLS}}]
  \label{cart}
  The collection $\{\m_\sigma\}_{\sigma\in\Sigma^n}$ forms the
  local data for a Cartier divisor if and only if
$\langle \m_\sigma, \u_\rho\rangle$ is equal for all maximal cones $\sigma$
  containing the ray $\rho$.\end{lemma}

As we have already remarked, $-\langle \m_\sigma,\u_\rho\rangle$ is
the multiplicity of $D_\rho$ in the divisor; the condition for
$\{\m_\sigma\}$ to be local data
amounts to saying that the formula for the multiplicity of $D_\rho$ does not depend on which $\sigma$ is used, among those containing $\rho$.

We write $\CDiv(X_\Sigma)$ for the torus invariant Cartier divisors on $X_\Sigma$. We write
$\Div_0(X_\Sigma)$ for the principal divisors on $X_\Sigma$. These are the
Cartier divisors for which all $m_\sigma$ are equal. The Picard group of
$X_\Sigma$, denoted $\Pic(X_\Sigma)$, is defined to be $\CDiv(X_\Sigma)/\Div_0(X_\Sigma)$. Under our assumptions
on the fan $\Sigma$, $\CDiv(X_\Sigma)$ is a free abelian group whose rank
is $|\Sigma^1|-n$ \cite[Theorem 4.2.1]{CLS}.

Let $D$ be a Cartier divisor on a normal variety $X$ and let $C$ 
a complete curve in $X$.
We write $D\cdot C$ for the intersection product of $D$ and $C$.
We will not define it in full generality, but the following can be taken
as a definition in the toric setting.

\begin{definition}\label{inter}
On the toric variety $X_\Sigma$, let $D$ be a torus-invariant
Cartier divisor, which is thus given by a collection of local data
$\{\m_\sigma\}_{\sigma\in\Sigma^n}$. Let $C$ be a complete, irreducible
torus-invariant curve in $X_\Sigma$; it is therefore the closure of $\mathcal O_\tau$
for some codimension one $\tau$ in $\Sigma$. The cone $\tau$ 
separates two maximal cones $\sigma$ and $\sigma'$. Let $u$ be an
element of $\sigma'$ which maps to a generator of $N/N_\tau$.
(Here $N_\tau$ is the lattice generated by $\tau$, so $N/N_\tau$ is isomorphic to $\mathbb Z$.)
Then
$$D\cdot C=\langle \m_\sigma - \m_{\sigma'},\u\rangle$$
\end{definition}
We take this as the definition of $D\cdot C$; see also
\cite[Proposition 6.3.8]{CLS}, which shows that this definition agrees with the
definition for general varieties. 

A Cartier divisor on a normal variety $X$ is called nef (``numerically
effective'') if $D\cdot C\geq 0$ for every irreducible complete curve $C$ in $X$. In the toric
case, $D$ is nef if and only if $D\cdot C\geq 0$ for every irreducible
torus-invariant complete curve, i.e., for those curves which are the closure of
$\mathcal O_\tau$ for some $\tau\in\Sigma^{n-1}$ \cite[Theorem 6.3.12]{CLS}.
Thus, the definition of $D\cdot C$ which we
have given above is sufficient to determine, for any Cartier divisor on $X_\Sigma$, whether or not
it is nef. 

A Cartier divisor $D$ on a normal variety is said to be  
numerically equivalent to zero iff $D\cdot C=0$
for all irreducible complete curves $C$; two Cartier divisors are
numerically equivalent if their difference is numerically equivalent to zero.
The nef cone of a variety is defined in the vector space of its Cartier
divisors modulo numerical equivalence, and tensored by $\mathbb R$.
For a toric variety, a torus-invariant
Cartier divisor is numerically equivalent to
zero if and only if it is principal \cite[Proposition 6.3.15]{CLS}.
Thus, we can view $\Nef(X_\Sigma)$ as contained in
$\Pic(X_\Sigma)_{\mathbb R}$. Then, $\Nef(X_\Sigma)$ is the cone
generated by the classes of the nef Cartier divisors in $\Pic(X_\Sigma)_{\mathbb R}$.

Associated to a torus-invariant Cartier divisor $D=\sum_{\rho\in\Sigma^1} a_\rho [D_\rho]$ on $X_\Sigma$, there is a polytope defined by  $$P_D=\{\m\in M_{\mathbb R}
\mid \la \m,\u_\rho\ra \geq -a_\rho \textrm { for all } \rho\in \Sigma^1\}.$$
See \cite[(6.1.1)]{CLS}.

Combining \cite[Theorems 6.1.7 and 6.3.12]{CLS}, we obtain the following
useful criterion for a Cartier divisor's being nef:
\begin{proposition}\label{nefprop}
  A Cartier divisor $D$ on $X_\Sigma$ with local data $\{\m_\sigma\}_{\sigma\in\Sigma^n}$ is nef if and only if $\m_\sigma\in P_D$ for all $\sigma \in \Sigma^n$.
\end{proposition}

A cone in a real vector space is said to be simplicial if its number of generating rays is equal
to the dimension of its span.

We can now state the theorem which we seek to prove about the
toric variety associated to the $g$-vector fan.

\begin{theorem} \label{simp} Let $\Sigma$ be the outer normal fan of $\mathbb A_\c$ (or equivalently the 
  $g$-vector fan corresponding to
  a Dynkin quiver). The nef cone of the toric variety $X_\Sigma$ is
  simplicial. \end{theorem}

\subsection{Combinatorics of $\Pic(X_\Sigma)$}

Let $N=\mathbb Z^n$ be a rank $n$ free abelian group with a fixed basis
$\b_1,\dots,\b_n$,
and let $M=\Hom(N,\mathbb Z)$, equipped with the dual basis $\b_1^*,\dots,\b_n^*$.
Let $\Sigma$ be the $g$-vector fan realized in $N_{\mathbb R}$ with respect
to the basis $\{\b_i\}$, and let $X_\Sigma$ be the corresponding toric
variety. The irreducible divisors correspond to rays of $\Sigma$, which
themselves correspond to elements $(i,j)\in \mathcal I$. We will write
$D_{ij}$ for the divisor corresponding to the ray $g(i,j)$.

We identify $M_{\mathbb R}$ with
the vector space $\mathbb R^n$ which is the image of the projection
$\pi$, identifying $\b_i^*$ with the
standard basis of $\mathbb R^n$. 

We will now construct a bijection between $\mathbb Z^{|\mathcal I^+|}$ and
$\Pic(X_\Sigma)$, the
Cartier divisors of $X_\Sigma$ up to linear equivalence.

Fix $\c=(c_{ij})_{(i,j)\in \mathcal I^+}$, with all $c_{ij}\in \mathbb Z$.
  Unlike earlier in the paper, we do not assume that the entries of
  $\c$ are non-negative. Nonetheless, as earlier, we get a well-defined
  affine subspace $\mathbb E_\c$ of $V$. There is a section of $\pi$,
  which we denote
  $\i_\c$, sending $\mathbb R^n$ to $\mathbb E_\c$. 

  The definition we gave of $\v_\c$ in Section \ref{section-three} does not
  make sense any longer, since some of the $c_{ij}$ are negative. However,
  it can be extended to the more general setting in the following way.

  Define
  a matrix $A$, whose rows are indexed by $\mathcal I$ and whose
  columns are indexed by $\mathcal I^+$, and such that the $(i,j),(k,l)$ entry
  is $\dim\Hom(M_{ij},M_{kl})$. If we order the rows and columns of $A$ in the same way, respecting the left-to-right order of the Auslander--Reiten quiver of $Q$ and keeping the elements of
  $\mathcal I\setminus \mathcal I^+$ for last, we obtain a matrix with 1's on
  the diagonal and zeros below it, with the last $n$ rows consisting
  entirely of zeros.

  We then define $\v=A\c$. Clearly, this recovers the definition of
  Section \ref{section-three} when the $c_{ij}$ are non-negative. The
  proof that $\v\in\mathbb E_\c$ goes through without essential alterations.
  
  For future use, let us write
  $A'$ for the submatrix of $A$ consisting of the rows indexed by
  elements of $\mathcal I^+$. Since it is upper triangular with 1's on
  the diagonal, it is invertible in $GL_{|\mathcal I^+|}(\mathbb Z)$. 

\begin{example} \label{exa}
Consider the case that $Q=\xymatrix{1 \ar[r] & 2}$.

    The elements of $\mathcal I$ are as follows:
    
      \[\begin{tikzpicture}[xscale=1.3, yscale=1]
    \draw (-3,2.4) node (01) {$(0,1)$};
    \draw (-2,1.2) node (03) {$(0,2)$};
    \draw (-1,2.4) node (11) {$(1,1)$};
    \draw (0,1.2) node (13)  {$(1,2)$};
    \draw (1,2.4) node (21)  {$(2,1)$};
    \draw[-stealth] (01)--(03);
    \draw[-stealth](03)--(11);
    \draw[-stealth](11)--(13);
    \draw[-stealth](13)--(21);
      \end{tikzpicture}\]

      In this case, there is only one ordering of $\mathcal I$ consistent
      with the left-to-right ordering of the Auslander--Reiten quiver,
      namely $(0,1),(0,2),(1,1),(1,2),(2,1)$. With respect to this ordering
      the matrix $A$ is given by
      $$ A=\left[\begin{array}{ccc} 1&1&0\\0&1&1\\0&0&1\\0&0&0\\0&0&0\end{array}\right]$$
        The matrix $A'$ consists of the top three rows of $A$.
\end{example}
  
  Let $G$ be the $n\times |\mathcal I|$ matrix whose $(i,j)$-th column is the
  $g$-vector corresponding to $(i,j)\in\mathcal I$. 
  As already noted in the proof of point (5) of Theorem \ref{th-one},
  $$i_c(\y)=-\y\cdot G + \v_\c.$$ 
  
  Pullback along $i_\c$ defines a map from functions on $V$ to functions
  on $\mathbb R^n=M_{\mathbb R}$. In particular, we can consider the pullbacks of the
  coordinate functions $i_c^*(p_{ij})$. From the formula for $i_\c(\y)$,
  we obtain:
  \begin{equation}\label{eq3}i_\c^*(p_{ij})(\y)=-\y\cdot g({i,j}) + v_{ij}.
    \end{equation}
 
  The zero locus of
  $i_\c^*(p_{ij})$ is an affine hyperplane in $\mathbb R^n = M_{\mathbb R}$.
  
  The maximal cones in $X_\Sigma$ correspond to maximal compatible sets
  in $\mathcal I$. For a maximal cone $\sigma$, define $-\m_\sigma(\c)$ to be the
  intersection of the zero loci of the pullbacks by $i_\c$
  of the coordinate functions
  corresponding to the rays of $\sigma$. Since the corresponding $g$-vectors
  are linearly independent, this intersection is a well-defined point.

  \begin{lemma} \label{a} The collection $\{\m_\sigma(\c)\}$ provides local data for a
    Cartier divisor on $X_\Sigma$. This Cartier divisor is
  $D(\c)=\sum_{(i,j)\in\mathcal I^+} v_{ij}[D_{ij}]$.\end{lemma}

  \begin{proof}
    As defined, the points $\m_\sigma(\c)$ lie in $M_{\mathbb R}$. We must first check
    that they are in fact elements of $M$. This follows from the fact that
    the $g$-vectors corresponding to the rays of $\Sigma$ form a basis
    for $\mathbb Z^n$.

    Now we check that the $\m_\sigma(\c)$
    satisfy the necessary condition to be local
    data of a Cartier divisor, as recalled in Lemma \ref{cart}.
    Let $\rho$ be a ray of $\Sigma$ corresponding to
    $(i,j)\in \mathcal I$. We must check that
    the value of $\langle \m_\sigma,\u_\rho\rangle$
    is independent of the choice of $\sigma$
    containing $\rho$. 
    This is so because by construction all the points
    $-\m_\sigma$ lie on the hyperplane in $M_\mathbb R$ where $i_c^*(p_{ij})=0$.
    Now $\u_\rho=g(i,j)$, and
    $i_c^*(p_{ij})(-\m_\sigma)=\la \m_\sigma,g(i,j)\ra + v_{ij}$, so
    $\la \m_\sigma,\u_\rho\ra = -v_{ij}$, independent of $\sigma$, as desired.
\end{proof}

  We now establish the following converse to the previous lemma:

  \begin{lemma} \label{b} Up to linear equivalence, any Cartier divisor on $X_\Sigma$ is given by local data
    $\{m_\sigma(\c)\}$ for some $\c$. \end{lemma}

  \begin{proof} Suppose we have a Cartier divisor $D=\sum_{(i,j)\in\mathcal I}
    v_{ij}[D_{ij}]$. There is a principal divisor whose coefficients with
    respect to the rays corresponding to $(i,j)\in \mathcal I\setminus
    \mathcal I^+$ take any integer values, so, by subtracting it from
    $D$, we may assume that $v_{ij}=0$ for $(i,j)\not\in \mathcal I^+$.
    Consider the vector $\v$ which is the $|\mathcal I^+|$-tuple
    consisting of the entries $v_{ij}$ for $(i,j)\in \mathcal I^+$.
    To construct $D$ as a Cartier divisor, we must show that there exists
    a $|\mathcal I^+|$-tuple of integers $\c$ such that 
    $\v=A'\c$. Since, as we have already commented, $A'$ is invertible,
    we find that $\c=(A')^{-1}(\v)$ gives us the necessary $\c$.
  \end{proof}

  From the two previous lemmas, we deduce:
  \begin{proposition} \label{iso} There is an isomorphism of abelian groups between
    $\mathbb Z^{\mathcal I^+}$ and $\Pic(X_\Sigma)$, sending $\c$ to $D(\c)$.
  \end{proposition}

  \begin{proof} Lemma \ref{a} establishes the existence of the desired
    map, which is clearly a morphism of groups,
    and Lemma \ref{b} shows that it is surjective.
    By \cite[Theorem 4.2.1]{CLS}, which we have already cited,
    $\Pic(X_\Sigma)$ is a free abelian group of rank $|\mathcal I|-n=|\mathcal I^+|$.
    Since $\mathbb Z^{\mathcal I^+}$ and $\Pic(X_\sigma)$ are
    free abelian groups of the same rank, a surjective map from one to the
    other must be an isomorphism.
    \end{proof}

  \begin{example}
We continue the setting of Example \ref{exa}.
    
      Let us set $c_{01}=2$, $c_{02}=1$, $c_{11}=1$. The figure below shows
      $\mathbb A_\c$ and $P_{D(\c)}$.

\definecolor{ffzzqq}{rgb}{1,0.6,0}
\definecolor{xdxdff}{rgb}{0.49019607843137253,0.49019607843137253,1}

\begin{tikzpicture}[line cap=round,line join=round,>=triangle 45,x=1cm,y=1cm]
\begin{axis}[
x=1.35cm,y=1.35cm,
axis lines=middle,
xmin=-4,
xmax=4,
ymin=-2,
ymax=2,
xtick={-7,-6,...,8},
ytick={-6,-5,...,5},]
\clip(-7.914920738377092,-6.756204103012107) rectangle (8.687945243638579,5.796772504847912);
\fill[line width=2pt,color=xdxdff,fill=xdxdff,fill opacity=0.42] (0,0) -- (0,1) -- (1,2) -- (3,2) -- (3,0) -- cycle;
\fill[line width=2pt,color=ffzzqq,fill=ffzzqq,fill opacity=0.41] (-3,0) -- (0,0) -- (0,-1) -- (-1,-2) -- (-3,-2) -- cycle;
\draw (1.3,1.2) node[anchor=north west] {$\mathbf{\mathbb{A}_{\underline{c}}}$};
\draw (-1.9,-0.7) node[anchor=north west] {$P_{D(\underline{c})}$};
\end{axis}
\end{tikzpicture}
      
 \end{example}
    
  \subsection{Proof of Theorem \ref{simp}}

We are now almost ready to prove Theorem \ref{simp}.
  
  \begin{lemma} If all the entries of $\c$ are non-negative, then
    $D(\c)$ is nef. 
  \end{lemma}

  \begin{proof} Let $D(\c)=\sum_{(i,j)\in\mathcal I} v_{ij}[D_{ij}]$.
    By Proposition \ref{nefprop}, it suffices to show that,
    provided the entries of $\c$ are non-negative integers, then 
    $\m_\sigma(\c)\in P_{D(\c)}$ for all $\sigma\in\Sigma^n$. 

    $\mathbb A_\c$ is essentially by definition
    the region cut out by the inequalities
    $i^*_\c(p_{ij})(\y) \geq 0$. By (\ref{eq3}), this
    is equivalent to $-\y\cdot g(i,j) + \v_{ij}\geq 0$. Thus,
    $\mathbb A_\c$ is cut out by the inequalities $-\y\cdot g(i,j) \geq -v_{ij}$.
    This says that $\mathbb A_\c=- P_{D(\c)}$.

    Now, by Theorem \ref{th-one}, we know that, assuming all the entries of
    $\c$ are non-negative, the points $-\m_\sigma(\c)$ are
    the vertices of $\mathbb A_\c$, and thus the points $\m_\sigma(\c)$ lie in
    $P_{D(\c)}$.
    \end{proof}
  
  \begin{lemma} If $\c$ has a negative entry, then $D(\c)$ is not nef.
  \end{lemma}

  \begin{proof} Recall that, as defined in Section \ref{cluster-intro}, a slice of the AR quiver of $Q$ is a choice, for each
    $1\leq i \leq n$ of a vertex $(i,s(i))$ such that if vertices $i$ and
      $i'$ of $Q$ are adjacent, then $(i,s(i))$ and $(i+1,s(i+1))$ are adjacent in the AR quiver.
Any slice is a compatible set. 
    
Suppose that $c_{kl}<0$. Choose two slices, $s$ and $s'$, so that
$s$ contains $(k,l)$, and $s'$ contains $(k,l+1)$, while the other vertices
in the two slices are the same. Write $\sigma$ and $\sigma'$ for the
corresponding cones of $\Sigma$. Since the two cones differ only in one ray,
they share a common codimension 1 face, $\tau$. Write $C$ for the
corresponding curve. We will show that $D(\c)\cdot C < 0$, showing that
$D(\c)$ is not nef.

We know that
$i_\c(\m_{\sigma}(\c))_{i,s(i)}=0$ for $1\leq i \leq n$. This implies that $i_\c(\m_{\sigma}(\c))_{k,l+1}=-c_{kl}$,
by the $\c$-deformed mesh relation, 
while $i_\c(\m_{\sigma'}(\c))_{i,s'(i)}=0$, so $i_\c(\m_{\sigma'}(\c))_{k,l+1}=0$.

To apply Definition \ref{inter}, we may take $\u=g(k,l+1)$. We find that $D(\c)\cdot C =
\langle \m_\sigma(\c)-\m_{\sigma'}(\c),g(k,l+1)\rangle =
i_c^*(p_{k,l+1})(-\m_\sigma) - i_c^*(p_{k+1,l})(-\m_{\sigma'}) = c_{kl}<0$
\end{proof}

  \begin{proof}[Proof of Theorem \ref{simp}]
    Proposition \ref{iso} establishes an isomorphism of abelian groups from
    $\mathbb Z^{|\mathcal I|}$ and $\Pic(X_\Sigma)$, and thus a linear transformation from $\mathbb R^{|\mathcal I|}$ to $\Pic(X_\Sigma)_{\mathbb R}$. By the previous two lemmas, the  cone
    $\Nef(X_\Sigma)$ is the image in $\Pic(X_\Sigma)_{\mathbb R}$
    of the positive orthant in $\mathbb R^{|\mathcal I|}$, and is thus
    simplicial.
\end{proof}

\subsection*{Acknowledgements}
This work was initiated in the LaCIM representation theory working group, and benefitted from discussion with the other members of the group, including Aram Dermenjian, Patrick Labelle, and Franco Saliola.   KM thanks Anna Felikson for her hospitality and fruitful discussion of the early stage of this work during his visit to Durham, UK. HT would like to thank Nima Arkani-Hamed, Frédéric Chapoton, Giovanni Cerulli Irelli, Giulio Salvatori, and Christian Stump for helpful conversations and comments.

\end{document}